\numberwithin{equation}{section}
\newcommand{\ra}{\rightarrow}
\newcommand{\half}{\frac{1}{2}}
\newcommand{\e}{\bs{\ul{\epsilon}}}
\newcommand{\R}{{\mathbb R}}
\newcommand{\grad}{\operatorname{grad}}
\renewcommand{\div}{\operatorname{div}}
\newcommand{\diag}{\operatorname{diag}}
\newcommand{\bs}{\boldsymbol}
\newcommand{\lap}{\Delta}
\newcommand{\pd}{\partial}
\newcommand{\ve}{\varepsilon}
\newcommand{\ul}{\underline}
\newcommand{\Ml}{\Bbb{M}_{\lambda}}
\newcommand{\Vl}{\Bbb{V}_{\lambda}}
\newcommand{\tblu}[1]{\textcolor{black}{#1}}
\newcommand{\tred}[1]{\textcolor{black}{#1}}
\newcommand{\tgreen}[1]{\textcolor{black}{#1}}
\newcommand{\algns}[1]{\begin{align*}#1\end{align*}}
\newcommand{\algn}[1]{\begin{align}#1\end{align}}
\newtheorem{theorem}{Theorem}[section]
\newtheorem{lemma}[theorem]{Lemma}
\newtheorem{cor}[theorem]{Corollary}
\newtheorem{rmk}[theorem]{Remark}
\newtheorem{eg}[theorem]{Example}
\begin{document}

\title[]{Parameter-robust discretization and preconditioning of Biot's consolidation model}\thanks{The works of Jeonghun~J.~Lee and Ragnar~Winther have been supported by the European Research Council under the European Union's Seventh Framework Programme (FP7/2007-2013) ERC grant agreement 339643. The work of Kent-Andre~Mardal has been supported by the Research Council of Norway through grant no. 209951 and a Center of Excellence grant awarded to the Center for Biomedical Computing at Simula Research Laboratory.}
\author{Jeonghun J. Lee, Kent-Andre Mardal and Ragnar Winther}
\maketitle

 \begin{abstract}
\tgreen{
Biot's consolidation model in poroelasticity has a number of applications in science, medicine, and engineering.  The model depends on various parameters, and in practical applications these parameters ranges over several orders of magnitude. A current challenge is to design discretization techniques and solution algorithms that are well behaved with respect to these variations. The purpose of this paper is to study finite element discretizations of this model and  construct  block diagonal preconditioners for the discrete Biot systems. The approach taken here is to consider
the stability of the problem in non-standard or weighted Hilbert spaces and employ the operator preconditioning approach.  
We derive preconditioners that are robust with respect to both the variations of the parameters and the mesh refinement. 
The parameters of interest are small time-step sizes, large bulk and shear moduli, and small hydraulic conductivity.  
}
\end{abstract}
\section{Introduction}
Biot's consolidation model describes the deformation of an elastic porous medium and the viscous fluid flow inside when the porous medium is saturated by the fluid. 
The unknowns are the displacement of the elastic medium, 
$\bs{u}$, and the fluid pressure, $p_F$. In homogeneous isotropic linear elastic porous media, the equations for the quasi-static Biot model are:  

\begin{align}
\begin{split} \label{eq:2-field-eq}
\tred{- \div( 2\mu \e(\bs{u}) + \lambda \div \bs{u} \ul{\bs{I}} - \alpha p_F \ul{\bs{I}})} &= \bs{f}, \\
s_0 \dot{p}_F + \alpha \div \dot{\bs{u}} - \div (\kappa \nabla p_F) &= g, 
\end{split}
\end{align}
where the dots represent time derivatives, $\mu$ and $\lambda$ are the Lam\'e coefficients of elastic medium, $\e(\bs{u})$ is the symmetric gradient of $\bs{u}$, 
$\ul{\bs{I}}$ is the $n \times n$ identity matrix, $s_0 \geq 0 $ is the constrained specific storage coefficient, $\kappa>0$ is the hydraulic conductivity determined by the permeability of medium and the fluid viscosity, and $\alpha>0$ is the Biot--Willis constant which is close to 1. The system \eqref{eq:2-field-eq} can be posed  on a bounded domain in two and three space dimensions.
The given functions $\bs{f}$ and $g$ represent body force and source/sink of fluid, respectively. We will assume throughout the paper that the parameters $\mu$, $\lambda$, and $s_0$ are scalar functions on the domain, while in general $\kappa$ can be a symmetric positive definite matrix-valued function. To be a well-posed mathematical problem, the system  \eqref{eq:2-field-eq} needs appropriate boundary and initial conditions. A discussion of general boundary conditions for the Biot system will be given in Remark~\ref{rmk:general-bc} below. Furthermore,  a mathematical discussion of well-posedness of this model can be found in  \cite{MR1790411}.

Due to importance of Biot's model in applications, ranging from geoscience to medicine, finite element methods for the model have been studied by many researchers.  For example, various primal methods are studied in \cite{NAG:NAG1610080304,MR603175,NAG:NAG1610080106}, mixed methods in \cite{MR1156589,MR1257948,MR1393902}, Galerkin least square methods in  \cite{MR2177147}, discontinuous Galerkin methods in \cite{MR3047799}, and combinations of different methods in \cite{MR2327964,MR2327966,MR2461315,NUM:NUM21775}, but this list is by no means complete.

It is important to construct  numerical methods which are robust with respect to  variation of model parameters since this  variation in many practical problems is quite large.
For example, relevant parameters in the soft tissue of the central nervous system
are Young's modulus of $1-60$ kPa, Poisson ratio from 0.3 to almost 0.5 (0.499 in \cite{ben2012finite}), and   
the permeability is $10^{-14}-10^{-16} m^2$~\cite{smith2007interstitial,stoverud2014relation}.  
In geophysics, Young's modulus is typically in the order of GPa, 
Poisson ratio $0.1-0.3$, while the permeability
may vary from approximately $10^{-9}$ to $10^{-21} m^2$~\cite{coussy2004poromechanics,wang2000theory}. Relations of Young's modulus $E$, Poisson ratio $\nu$ and the two elastic moduli $\mu$, $\lambda$ are 
$\mu = E/2(1+\nu)$ and $\lambda = E \nu /(1 + \nu)(1 - 2 \nu)$.  Consequently, $\mu$ and $\lambda$ are in the ranges of $300-500$ MPa and $100-500$ MPa, respectively, in geoscience applications, whereas
corresponding numbers are $\mu$  and $\lambda$ in the ranges $300-2000$ Pa and $500-10^6$ Pa in neurological applications. 

However, in the present paper we will not limit ourselves to the study of robustness with respect to model parameters of the finite element discretization of Biot's model.
In fact, our main concern is to be able to construct preconditioners for the discrete systems which are well behaved both with respect to variations of the model parameters and the refinement of the discretization. When large discrete systems are solved by iterative methods, the convergence rate depends heavily on the construction of suitable preconditioners. Such 
preconditioners for finite element discretizations of  Biot's model have been studied by many authors, cf. for example  \cite{MR3148142,NAG:NAG164,MR2922283,NME:NME500,MR3249373}. Recently, there is also an emerging interest for preconditioners which are robust with respect to model parameters~\cite{MR3249373,MR3395132}. However, robustness with respect to all model parameters remains challenging. 
In particular, we will derive preconditioners that are robust as the medium approaches the incompressibility limit
while the permeability is low. In our experience this represents the most difficult case, and it is 
also the case that occurs in many biomechanical applications.  

The purpose of this paper is to develop a stable finite element method for Biot's model, and a corresponding  preconditioner for the associated discrete systems,  such that the preconditioned systems have condition numbers which are robust with respect to variations of model parameters. More precisely, we aim to have a preconditioned system which is robust for small $\kappa$, small time-steps, large $\lambda$, large $\mu$, and mesh refinements. In order to obtain such a parameter-robust preconditioner we employ the operator preconditioning framework of \cite{MR2769031}. It turns out that typical formulations of Biot's model are not appropriate to apply the framework, so we develop a new three-field formulation of Biot's model and propose a parameter-robust block diagonal preconditioner for it. 

The present paper is organized as follows. In Section 2, we introduce some notation and conventions that will be used throughout the paper. Furthermore, we  briefly discuss the preconditioning framework of \cite{MR2769031} based on parameter-robust stability of the continuous problems, and illustrate this with some numerical examples based on simplified models which can be seen as subsystems of the Biot system. In Section 3 we explain some difficulties  related to more common formulations of the Biot system, and 
as a consequence we motivate a new three-field formulation. 
The discussion of finite element discretizations 
based on this three-field formulation is given in Section 4, and the stability  results are used to motivate the construction of 
parameter robust preconditioners.
The implementation of a special operator related to one of the blocks a block diagonal preconditioner is discussed in Section 5. Finally, in Section 6 we present some numerical experiments which illustrate our theoretical results.

\section{Preliminaries}
The system \eqref{eq:2-field-eq} can in principle be studied on rather general domains $\Omega$ in two or three dimensions. However, our main goal is to study finite element approximations of this system, and therefore we will assume throughout this paper that $\Omega$ is a bounded \tblu{polyhedral} domain in $\R^n$, with $n=2$ or $3$. We will use $H^k = H^k(\Omega)$ to denote the Sobolev space of functions on $\Omega$ with $k$ derivatives in $L^2$ and the corresponding norm is denoted by $\|\cdot\|_k$. Further, let $H^k_0$ be the closure of $C^\infty_0(\Omega)$ in $H^k$ with dual space denoted by $H^{-k}$ and $(\cdot, \cdot)$ \tblu{denote} the $L^2$ inner product of scalar, vector, and matrix valued functions as well as the duality paring between $H^k_0$ and $H^{-k}$. The space $L_0^2$ is the space of $L^2$ functions with mean value zero. Boldface symbols are used to denote vector valued functions or spaces, and symbols of boldface with underline are used to denote matrix valued functions. 

Throughout this paper we use $A \lesssim B$ to denote the inequality $A \leq CB$ with a generic constant $C>0$ which is independent of the discretization parameters and the model parameters,
and $A \sim B$ will stand for $A \lesssim B$ and $B \lesssim A$. If needed, we will use $C$ to denote generic positive constants in inequalities. For a scalar valued function $g$, $\nabla g$ is a (column) vector valued function. For a matrix valued function $\bs{\ul{g}}$, $\div$ is understood as a row-wise divergence which results in the vector valued function $\div \bs{\ul{g}}$. Adopting these conventions, the equations \eqref{eq:2-field-eq} are well-defined.

\subsection{Preconditioning of parameter-dependent systems} \label{subsec:framework}
Let us briefly review the abstract framework of parameter-robust preconditioning in \cite{MR2769031}. Let $X$ be a separable, real Hilbert space with inner product $\langle \cdot, \cdot \rangle_X$ and the associated norm $\| \cdot \|_X$. For two Hilbert spaces $X$ and $Y$, $\mathcal{L}(X, Y)$ is the Hilbert space of bounded linear maps from $X$ to $Y$. Let us denote the dual space of $X$ by $X^*$ and the duality pairing of $X$ and $X^*$ by $\langle \cdot, \cdot \rangle$. 
Suppose that $\mathcal{A} \in \mathcal{L}(X, X^*)$ is invertible and also symmetric in the sense that 
\begin{align*}
\langle \mathcal{A} x, y \rangle = \langle x, \mathcal{A} y \rangle, \qquad x, y \in X.
\end{align*}
For given $f \in X^*$ consider a problem finding $x \in X$ such that 
\begin{align} \label{eq:Axf}
 \mathcal{A} x = f. 
\end{align}
Its preconditioned problem with a symmetric isomorphism $\mathcal{B} \in \mathcal{L}(X^*, X)$ is 
\begin{align*}
\mathcal{BA} x = \mathcal{B} f. 
\end{align*}
The convergence rate of a Krylov space method for this problem can be bounded by the condition number, $K(\mathcal{BA})$, given by 
\algns{
K(\mathcal{BA}) := 
\| \mathcal{BA} \|_{\mathcal{L}(X, X)} \| (\mathcal{BA})^{-1} \|_{\mathcal{L}(X, X)} .
}

Parameter-dependent problems are handled in this framework as follows. Let $\ve$ denote a collection of parameters, and $\mathcal{A}_{\ve}$ the parameter-dependent coefficient operator. 
A systematic way to construct an $\ve$-robust  preconditioner $\mathcal{B}_{\ve}$, as proposed in \cite{MR2769031}, is to consider
the mapping property of $\mathcal{A}_{\ve}$ in $\ve$-dependent Hilbert spaces $X_{\ve}$ and $X^*_{\ve}$.
The key property is to choose the spaces $X_{\ve}$ and $X^*_{\ve}$ such that 
$\mathcal{A_\ve}$ is a map from $X_{\ve}$ to $X^*_{\ve}$, and  with corresponding operator norms
$\| \mathcal{A}_{\ve} \|_{\mathcal{L}(X_{\ve}, X_{\ve}^*)} \mbox{ and } \| \mathcal{A}_{\ve}^{-1} \|_{\mathcal{L}(X^*_{\ve}, X_{\ve})} $
bounded independently of $\ve$. In this case the preferred preconditioner, $\mathcal{B_\ve}$, 
is a map from $X_{\ve}^*$ to $X_{\ve}$  with the property that 
$\| \mathcal{B}_{\ve} \|_{\mathcal{L}(X^*_{\ve}, X_{\ve})} \mbox{ and } \| \mathcal{B}_{\ve}^{-1} \|_{\mathcal{L}(X_{\ve}, X_{\ve}^*)}$
are bounded independently of $\ve$. If such an operator $\mathcal{B}_{\ve}$ is identified, then the condition number 
$K(\mathcal{B_\ve} \mathcal{A_\ve})$ is bounded independently of $\ve$, since both  
$\mathcal{B_{\ve}A_{\ve}} $ and $(\mathcal{B_{\ve}A_{\ve}})^{-1}$ are operators on  $\mathcal{L}(X_{\ve}, X_{\ve})$, with corresponding operator norms bounded independently of  
$\ve$. 

As we will illustrate below the discussion outlined above can often most easily be done in the continuous setting. On the other hand, in a computational  setting we need  preconditioners for the corresponding 
discrete  problems. In fact, if we utilize a finite element discretization which is uniformly stable with respect to the parameters, then the structure of the preconditioners in the discrete case will be the natural 
discrete analogs of the preconditioners in the continuous case. However, the preconditioners derived by the procedure above will often require exact inverses of operators which cannot be inverted cheaply. Therefore, 
in order to obtain effective  robust preconditioners in the discrete case, we also have to replace these exact inverses by 
related equivalent operators, often obtained by common procedures such as multilevel methods or domain decomposition methods. We refer to \cite{MR2769031} and the examples below for more details.

A challenge of the Biot system is the dependency of several different and independent parameters like 
the Lam\'{e} elastic parameters  as well as parameters related to porous flow such as permeability and the Biot-Willis constant.  
The aim is to achieve robustness with respect to all model parameters, as well as the resolution of the discretization.   
To motivate this discussion we start by considering two simplified examples related to the Biot equations. 
The first example illustrates the case where the permeability tends to zero,  while the Lam\'{e} parameters are of unit scale. 
In the second example we consider the case where the elastic material tends towards the  incompressible limit. 
Both examples are special cases of the Biot system.

\begin{eg} \label{thm:eg1} \normalfont
Consider a system of equations
\begin{align}
\begin{split} \label{eq:ex1-eq}
- \lap \bs{u} + \nabla p &= \bs{f} , \\
- \div \bs{u}  + \div(\kappa \nabla p) &= g ,
\end{split}
\end{align}
with unknowns $\bs{u} : \Omega \ra \R^n$ and $p : \Omega \ra \R$. As boundary conditions  we use homogeneous Dirichlet 
condition for $\bs{u}$, i.e.,  $\bs{u}|_{\pd \Omega} = 0$, while we use homogeneous Neumann condition for $p$. The  parameter $\kappa >0$ is taken to be a constant in this example. A variational formulation of this problem  is to find $(\bs{u}, p) \in \bs{H}_0^1 \times H^1\cap L_0^2$ satisfying 
\begin{align*}
(\nabla \bs{u}, \nabla \bs{v}) - (p, \div \bs{v}) &= (\bs{f} , \bs{v}), & & \forall \bs{v} \in \bs{H}_0^1, \\
-(\div \bs{u}, q)  - (\kappa \nabla p, \nabla q) &= (g, q), & & \forall q \in H^1 \cap L_0^2.
\end{align*}
This system has a form of \eqref{eq:Axf} with $X = \bs{H}_0^1 \times H^1\cap L_0^2$ and 
\[
\mathcal{A} = 
\left(
\begin{array}{cc}
- \Delta & \nabla \\  
-\div &   \div (\kappa\nabla)   
\end{array}
\right) .
\]
If we define $X_{\kappa}$ as the Hilbert space $\bs{H}_0^1 \times H^1\cap L_0^2$ with $\kappa$-dependent norm 
given by 
\begin{align*}
\| (\bs{u}, p) \|_{X_{\kappa}}^2 = \| \bs{u} \|_1^2 + \| p \|_0^2 + \kappa \| \nabla p \|_0^2,  
\end{align*}
then one can check that $\mathcal{A} : X_{\kappa} \ra X_{\kappa}^*$ is an isomorphism
with corresponding operator norms of $\mathcal{A}$ and $\mathcal{A}^{-1}$ bounded independently of 
$\kappa$. Here the norm on $X_{\kappa}^* \supset \bs{L}^2 \times L^2$ is defined from the norm on 
$X_{\kappa}$ by extending the $L^2$ inner product to a duality pairing.
To define a robust  preconditioner we need to identify an isomorphism $\mathcal{B} : X_{\kappa}^* \ra X_{\kappa}$ with corresponding operator norms 
of $\mathcal{B}$ and $\mathcal{B}^{-1}$
bounded independently of $\kappa$.
A natural choice is a block-diagonal operator  of the form 
\begin{align}  \label{eq:ex1-precond}
\mathcal{B} = 
\left(
\begin{array}{cc}
-\lap^{-1} & 0 \\
0 & \left(  I - \kappa \lap \right)^{-1}
\end{array} 
\right) .
\end{align}
However, the operator $\mathcal{B}$ only indicates the structure of the desired preconditioner of the discrete system.
We will discretize this problem on the unit square in $\R^2$ by the lowest order Taylor-Hood element
with respect to a mesh of uniform squares. This method is uniformly stable with respect to $\kappa$ in the proper norms.
Furthermore, to obtain an effective preconditioner the exact inverses in the definition of $\mathcal{B}$
are replaced by corresponding algebraic multigrid preconditioners for the operators $-\lap$ and $I - \kappa \lap$,  implemented in the software library Hypre \cite{Falgout2002} with default settings. 
The preconditioned system is implemented using cbc.block \cite{mardal2012block} and FEniCS \cite{fenicsbook}. 
The eigenvalue estimates are obtained by the conjugate gradient method of normal equation of the system with convergence criterion $10^{-16}$, and we refer to \cite{mardal2012block} for more details. 
The same setup is used throughout the paper.

We present numerical results in Table~\ref{ex1-exp}. The numbers of iterations and condition numbers increase as the value of $\kappa$ decreases but they are asymptotically stable and are still bounded in the limit case $\kappa=0$, which is the Stokes equation. We remark that the zero eigenvalue of the system associated with $H^1 \cap L_0^2$ is ignored in the computation of condition numbers.
\end{eg}

\begin{table}[ht]  
\caption{\small{Number of iterations of MinRes solver of system \eqref{eq:ex1-eq} with algebraic multigrid (AMG) preconditioner of the structure \eqref{eq:ex1-precond}. Estimates of the condition numbers of the preconditioned system are given in parenthesis. ($\Omega$ = unit square, partitioned as bisections of $N \times N$ rectangles, convergence criterion\protect\footnotemark}
with relative residual of $10^{-6}$)}
\label{ex1-exp}
\centering
\begin{tabular}{>{\small}c >{\small}c | >{\small}c >{\small}c >{\small}c >{\small}c >{\small}c}
\hline 
& & \multicolumn{5}{>{\small}c}{$N$} \\ 
&  & $16$ & $32$ & $64$ & $128$ & $256$ 	 \\  
\hline 
\multirow{8}{*}{$\kappa$} 
& $10^0$    & $13 \;(1.3) $  & $ 13 \;(1.2) $ & $ 14 \;(1.2) $  &  $ 14 \;(1.2) $ & $ 14 \;(1.2) $   \\ 
& $10^{-1}$ & $16 \;(1.7) $  & $16\;(1.8) $ & $16 \;(1.8) $  &  $16 \;(1.8) $ & $16 \;(1.8) $   \\ 
& $10^{-2}$ & $22 \;(3.1) $  & $24 \;(3.4) $ & $23 \;(3.6) $  &  $23 \;(3.8) $ & $24 \;(3.9) $   \\ 
& $10^{-3}$ & $30 \;(4.6) $  & $29 \;(4.9) $ & $29 \;(5.4) $  &  $30 \;(5.8) $ & $30 \;(6.1) $   \\ 
& $10^{-4}$ & $35 \;(6.1) $  & $36 \;(6.2) $ & $36 \;(6.4) $  &  $35 \;(6.6) $ & $34 \;(7.0) $   \\ 
& $10^{-5}$ & $38 \;(7.0) $  & $39 \;(7.8) $ & $41 \;(7.9) $  &  $39 \;(7.7) $ & $39 \;(7.7) $   \\ 
& $10^{-6}$ & $38 \;(7.1) $  & $40 \;(8.3) $ & $42 \;(9.2) $  &  $44 \;(9.4) $ & $43 \;(9.0) $   \\ 
& $0$ 	    & $38 \;(7.1) $  & $42 \;(8.4) $ & $44 \;(9.5) $  &  $47 \;(10.5) $ & $48 \;(11.2) $   \\ 
\hline 
\end{tabular}  
\end{table}
\footnotetext{ Convergence criterion is $(\mathcal{B} r_k, r_k) / (\mathcal{B} r_0, r_0) \le 10^{-6}$ where $\mathcal{B}$ is preconditioner and $r_k$ is the residual of $k$-th iteration. }

\begin{eg} \label{thm:eg2} \normalfont
Consider a system related to the Lam\'{e} problem in linear elasticity: Find $\bs{u} :\Omega \ra \R^n$, $p : \Omega \ra \R$ for 
\begin{align} \label{eq:ex2-eq}
\begin{split}
- \div \e (\bs{u}) - \nabla p &= \bs{f}, \\
\div \bs{u} - \frac{1}{\lambda} p &= g, 
\end{split}
\end{align}
with $\bs{u}|_{\pd \Omega} = 0$, where $1 \leq \lambda < +\infty$ is a positive constant and $\e (\bs{u})$ is the symmetric gradient of $\bs{u}$. 
\tgreen{When $g = 0$, this is the Lam\'e problem and $p = \lambda \div u$ is called ``solid pressure''.}
Its variational form is to find $\bs{u} \in \bs{H}_0^1$ and $p \in L^2$ such that 
\begin{align*}
(\e (\bs{u}), \e(\bs{v})) + (p, \div \bs{v}) & = (\bs{f} , \bs{v}), & & \forall \bs{v} \in \bs{H}_0^1, \\
(\div \bs{u}, q) - \frac{1}{\lambda} (p, q) &= g, & & \forall q \in L^2 . 
\end{align*}
This is a saddle point problem with a stabilizing term $-(1/\lambda) (p,q)$, and the stabilization becomes weaker as $\lambda$ becomes larger. Let $X = \bs{H}_0^1 \times L^2$ be the Hilbert space with standard norm. In the limit when $\lambda = +\infty$ the system is not stable in these norms.
This is due to the fact that 
 $\div \bs{H}_0^1 \subsetneq L^2$ and as a consequence the Brezzi condition for stability of saddle point problem is not fulfilled \cite{BFBook}. More precisely, $\div \bs{H}_0^1$ can control only the $L^2$ norm of the mean-value zero part of $p$, and the stabilizing term is needed to control the mean-value part of $p$. Since the stabilizing term is dependent on $\lambda$, we need a $\lambda$-dependent norm on $\bs{H}_0^1 \times L^2$ to have $\lambda$-independent stability of the system. 

Before we define an appropriate $\lambda$-dependent norm, we need some preliminaries. Let $P_m$ be the linear operator in $L^2$ such that 
\begin{align*} 
P_m \phi := \left( \frac{1}{|\Omega|} \int_{\Omega} \phi \,dx \right) \chi_{\Omega}, \qquad \forall \phi \in L^2,
\end{align*}
where $\chi_{\Omega}$ is the characteristic function on $\Omega$ and $|\Omega|$ is the Lebesgue measure of $\Omega$. Notice that $P_m \phi$ and $\phi - P_m \phi$ are the decomposition of $\phi$ into its {\it mean-value} part and {\it mean-value zero} part. For $q \in L^2$ we denote its mean-value and mean-value zero parts by 
\begin{align} \label{eq:decomp}
{q}_{m} := P_m {q}, \qquad {q}_{0} = \tblu{q} - q_{m}. 
\end{align}
We now define a Hilbert space $X_{\lambda}$ by 
\[
\| (\bs{v}, q) \|_{X_{\lambda}}^2 = \| \tblu{\bs{v}} \|_1^2 + \left( \frac{1}{\lambda} \| q_m \|_0^2 + \| q_0 \|_0^2 \right) ,
\]
then the system is $\lambda$-independent stable in $X_{\lambda}$. We will not give  a detailed proof of stability here since it can be obtained by modifying the proof of Theorem~\ref{thm:3f-inf-sup} below. 
But the rough explanation is that the mean value of $p$ cannot be control by the inf-sup condition,
and therefore this part of the norm has to be weighted properly in balance with the stabilizing term $\lambda^{-1} \|p_m\|_0^2$,
while the rest of $p$ is controlled by the inf-sup condition.

We present numerical results for two different preconditioners. The Hilbert space $X$ and $X_{\lambda}$ lead to preconditioners of the forms 
\begin{align} \label{eq:ex2-precond}
 \mathcal{B}_1 = 
\left(
\begin{array}{cc}
-\lap^{-1} & 0 \\  
0 & I^{-1}
\end{array}
\right) , \qquad
 \mathcal{B}_2 = 
\left(
\begin{array}{cc}
-\lap^{-1} & 0 \\  
0 & \left(I_0 + \frac{1}{\lambda} I_m \right)^{-1}
\end{array}
\right) .
\end{align}
Here the appearance of the operator $I^{-1}$ calls for an explanation. In fact, the operator $I$ should not be thought of as 
the identity operator on the Hilbert space $L^2$, but rather as the Riesz map between 
this space and its dual. In particular, in the corresponding discrete setting the operator $I^{-1}$ is typically 
represented by the 
inverse of a mass matrix. In a similar manner, the operators $I_0$ and $I_m$ should be interpreted as 
maps of $L^2$ into the duals of $L_0^2$ and its complement. We refer to \cite[Section 6]{MR2769031} for more details.

We employ the lowest order Taylor-Hood discretization and test the efficiency of the preconditioners on different refinements of the unit square.  
The preconditioner $\mathcal{B}_1$ is implemented by replacing the exact inverse of $-\lap$ by an algebraic multigrid operator, and we use the Jacobi iteration to approximate $I^{-1}$. 
The construction of $\mathcal{B}_2$ is technical due to the second block, and we postpone the details of the construction to Section~\ref{sec:weighted-jacobi}. 

Numerical results for the preconditioners $\mathcal{B}_1$ and $\mathcal{B}_2$ are given in Tables~\ref{ex2-expa} and \ref{ex2-expb}. The results for the preconditioner with structure $\mathcal{B}_2$ is completely satisfactory.
Both the number of iterations and the condition numbers appear to be uniformly bounded with respect to $\lambda$ and 
mesh refinement. However, the results for the preconditioner $\mathcal{B}_1$ are different.
In this case the condition numbers appear to grow linearly with $\lambda$, while the number of iterations 
still seems to be bounded, even if they appear to be slightly less robust in this case.
So in the case of the preconditioner $\mathcal{B}_1$ the condition number will not lead to a sharp bound on the
number of iterations.
In fact, by comparing the operator $\mathcal{B}_1$ with the uniform preconditioner $\mathcal{B}_2$ one can 
argue that the operator  $\mathcal{B}_1 \mathcal{A}$ has one isolated eigenvalue   
that tends to $0$ as $\lambda$ increases, while the rest of the spectrum lie on an interval bounded independently of the mesh resolution and $\lambda$.
In cases where only few eigenvalues are outside a bounded spectrum, it is well-known that the Conjugate Gradient and the Minimum Residual methods are very efficient~\cite{axelsson1986rate, nielsen2013analysis} and therefore $\mathcal{B}_1$ is about as efficient as $\mathcal{B}_2$, but slightly less robust.

\end{eg}

\begin{table}[ht]  
\caption{\small{Number of iterations of MinRes solver of system \eqref{eq:ex2-eq} with preconditioner of the form $\mathcal{B}_1$ in \eqref{eq:ex2-precond}. Estimates of the condition numbers of the preconditioned system are given in parenthesis. ($\Omega$ = unit square, partitioned as bisections of $N \times N$ rectangles, convergence criterion with relative residual of $10^{-6}$) }} \label{ex2-expa}
\centering
\begin{tabular}{>{\small}c >{\small}c | >{\small}c >{\small}c >{\small}c >{\small}c >{\small}c}
\hline 
& & \multicolumn{5}{>{\small}c}{$N$} \\ 
&  & $16$ & $32$ & $64$ & $128$ & $256$ 	 \\  
\hline 
\multirow{7}{*}{$\lambda$} 
& $10^0$ & $29 \;(3.5)$  	& $29 \;(3.6)$ 	& $29 \;(3.6)$  &  $29\;(3.6)$ & $29\;(3.6)$   \\ 
& $10^1$ & $40 \;(10.8)$  	& $41\; (10.8)$ & $38 \;(10.9)$  &  $38\;(10.9)$ & $36\;(10.9)$   \\ 
& $10^2$ & $53 \;(95.6)$  	& $59\; (96.2)$ & $54 \;(96.7)$  &  $53\;(96.9)$ & $52\;(97.0)$   \\ 
& $10^3$ & $60 \;(958)$  	& $62 \;(964)$ 	& $62 \;(969)$  &  $61\;(972)$ & $60\;(973)$   \\ 
& $10^4$ & $66 \;(9589)$  	& $69\; (9649)$ & $44 \;(9697)$  &  $43\;(9724)$ & $42\;(9734)$   \\ 
& $10^5$ & $44\;(95892)$  	& $44\;(96501)$ & $44 \;(96981)$  &  $43\;(97248)$ & $42\;(97344)$   \\ 
& $10^6$ & $44\;(958942)$  	& $44\;(964970)$& $44\;(969940)$  &  $43\;(972566)$ & $41\;(973587)$   \\ 
\hline 
\end{tabular}                          
\end{table}

\begin{table}[ht]  
\caption{\small{Number of iterations of MinRes solver of system \eqref{eq:ex2-eq} with preconditioner of the form $\mathcal{B}_2$ in \eqref{eq:ex2-precond}. Estimates of the condition numbers of the preconditioned system are given in parenthesis. ($\Omega$ = unit square, partitioned as bisections of $N \times N$ rectangles, convergence criterion with relative residual of $10^{-6}$) }} \label{ex2-expb}
\centering
\begin{tabular}{>{\small}c >{\small}c | >{\small}c >{\small}c >{\small}c >{\small}c >{\small}c}
\hline 
& & \multicolumn{5}{>{\small}c}{$N$} \\ 
&  & $16$ & $32$ & $64$ & $128$ & $256$ 	 \\  
\hline 
\multirow{7}{*}{$\lambda$} 
& $10^0$ & $29\;(3.5) $  & $29\;(3.6) $ & $29\;(3.6) $  &  $29\;(3.6) $ & $29\;(3.6) $   \\ 
& $10^1$ & $41\;(11.6)$  & $41\;(11.6)$ & $41\;(11.6)$  &  $38\;(11.6)$ & $38\;(11.6)$   \\ 
& $10^2$ & $46\;(18.4)$  & $45\;(18.4)$ & $44\;(18.4)$  &  $44\;(18.4)$ & $44\;(18.4)$   \\ 
& $10^3$ & $46\;(19.6)$  & $46\;(19.6)$ & $45\;(19.6)$  &  $45\;(19.6)$ & $44\;(19.6)$   \\ 
& $10^4$ & $45\;(19.7)$  & $46\;(19.7)$ & $45\;(19.7)$  &  $45\;(19.7)$ & $44\;(19.7)$   \\ 
& $10^5$ & $45\;(19.7)$  & $46\;(19.7)$ & $46\;(19.7)$  &  $44\;(19.7)$ & $44\;(19.7)$   \\ 
& $10^6$ & $46\;(19.7)$  & $46\;(19.7)$ & $43\;(19.7)$  &  $44\;(19.7)$ & $44\;(19.7)$   \\ 
\hline 
\end{tabular}                          
\end{table}

\subsection{Parameter rescaling of the Biot system} 
The systems in the above two examples have only one parameter, so it is relatively easy to find function spaces and parameter-dependent norms \tblu{such} that the aforementioned preconditioner construction is applicable. However, the Biot system has several parameters of different ranges, so it is not easy to find appropriate function spaces and their parameter-dependent norms. In the rest of this section, we will rescale parameters of the Biot system and reduce it to a problem with three parameters. This procedure will not only simplify the problem but also clarify intrinsic parameters of the system. We emphasize that $\mu$, $\lambda$, $s_0$, and $\kappa$ are allowed to be functions on the domain, while $\alpha$ is assumed to be a constant.

Recall that when we solve a time-dependent problem numerically, we discretize the problem in time and solve a static problem at each time step, so preconditioning of time-dependent problem is reduced to preconditioning of static problem at each time step. If we consider an implicit time discretization (e.g., the backward Euler method) \tblu{applied} to \eqref{eq:2-field-eq} with time-step size $\delta^2$ ($0 < \delta \leq 1$), and multiply the second equation with $-\delta^2$, then we obtain  a static problem 
\begin{align*}
\tred{- \div ( 2\mu \e(\bs{u}) + \lambda \div \bs{u} \ul{\bs{I}} - \alpha p_F \ul{\bs{I}} )} &= \bs{f}, \\
- s_0 p_F - \alpha \div \bs{u} + \delta^2 \div (\kappa \nabla p_F) &= \tilde{g}
\end{align*}
with some right hand side $\tilde{g}$. 

To reduce this system further, we recall the physical derivation of $s_0$. The storage coefficient $s_0$ is the increase of the amount of fluid for the unit increase of fluid pressure, when volumetric strain is kept as constant. More precisely, $s_0 = \phi c_F + (1-\phi) c_S$ where $\phi$ is the porosity of solid, and $c_F \geq 0$, $c_S \geq 0$ are compressibilities of the fluid and solid. For derivation of these equations from physical modeling, we refer to standard porous media references, for instance, \cite{anandarajah2010computational}.
The parameter $c_S$ and other parameters $\mu$ and $\lambda$ are related so that $c_S \sim 1/(2\mu / n + \lambda)$ holds \tblu{with $n$, the spatial dimension of $\Omega$}. In many practical applications, $\mu \lesssim \lambda$ and $c_F \approx 0$ hold, so we have $s_0 \sim 1/ \lambda$. Furthermore, $\alpha$ is a constant close to 1, so we will assume that  $s_0$ scales like $\alpha^2 / \lambda$. Therefore, to reduce the number of parameters in our system we will simply let 
$s_0 = \alpha^2/\lambda$ for the rest of the discussion in this paper. We emphasize that the equality is not essential.
Our analysis below can easily be adopted to the situation where 
$s_0$ scales like $\alpha^2 / \lambda$. However, this rather artificial expression of $s_0$ is useful when we normalize the system later. In addition, $\delta^2 \kappa$ can be regarded as one small parameter because the hydraulic conductivity $\kappa$ is small in general.
As a consequence, introducing $\tilde{\kappa} = \delta^2 \kappa$, we have a simplified system 
\begin{align*}
\tred{- \div (2 \mu \e(\bs{u}) + \lambda \div \bs{u} \ul{\bs{I}} - \alpha p_F \ul{\bs{I}}) } &= \bs{f}, \\
- \frac{\alpha^2}{\lambda} p_F - \alpha \div \bs{u} + \div (\tilde{\kappa} \nabla p_F) &= \tilde{g}, 
\end{align*}
with $1 \leq \lambda < + \infty$, small $\tilde{\kappa}$, and $\alpha \sim 1$. However, in practical applications, $\mu$ can be much larger than $\alpha$, so we rescale the above equations to include this factor. 
To do so we assume $\mu$ is a spatial function with a uniform  scale, i.e., there is a constant $\bar \mu$ such that 
$\mu/\bar \mu \sim 1$. 
Let 
\begin{align*}
\lambda' = \frac{\lambda}{2 \bar \mu}, \quad \mu' = \frac{\mu}{\bar \mu}, \quad\alpha' = \frac{\alpha}{2 \bar \mu}, \quad \kappa' = \frac{\tilde{\kappa}}{2 \bar \mu}, \quad \bs{f}' = \frac{\bs{f}}{2\bar \mu}, \quad g' = \frac{\tilde{g}}{2 \bar \mu}. 
\end{align*}
Dividing the above two equations by $2 \bar \mu$, the final simplified equations are 
\begin{align}
\begin{split} \label{eq:red-eq}
\tred{ - \div (\mu' \e(\bs{u}) - \lambda' \div \bs{u} \ul{\bs{I}} - \alpha' p_F \ul{\bs{I}}) } &= \bs{f}', \\
- \tgreen{\frac{(\alpha')^2}{\lambda'}} p_F - \alpha' \div \bs{u} + \div (\kappa' \nabla p_F) &= g', 
\end{split}
\end{align}
where $\alpha'$ is a small positive constant, $\lambda'$ and $\mu'$ are positive scalar functions such that $\lambda'$ is 
bounded from below and $\mu' \sim 1$, while $\kappa'$ is a positive definite matrix valued function with eigenvalues bounded from above.

In summary, we have reduced the original Biot system to a system with three intrinsic parameters $\lambda'$, $\alpha'$, and
$\kappa'$ which all may be unbounded. More precisely, the positive constant $\alpha'$ may be arbitrarily small,
the scalar function $\lambda'$ may be arbitrarily large, while the positive eigenvalues of $\kappa'$ may be close to zero.
Since $\mu'$ is bounded from above and below this parameter has no essential effect on the properties of the system. Therefore, to reduce the number of parameters of the system, we take $\mu' =1$ in the discussion below.

For the biomedical applications discussed in the introduction in units of Pascal, gram, milimeter and second,  
$\mu$ was in the order of $1-60$ kPa which makes  
$\lambda'$, $\alpha'$, $\kappa'$ in the ranges of $0.25-500$, $10^{-3}-10^{-5}$, $10^{-8}-10^{-12}$, respectively.        
In geoscience, a representative $\mu$ is 10 GPa and units for pressure, viscosity and permeability  are pounds per square inch (psi), centi Poise (cP), mili Darcy (mD).   
Using these units, $\lambda'$, $\alpha'$, $\kappa'$ are in the ranges of $0.25-3.5$, $10^{-10}$, $10^{-9}-10^{-13}$, respectively.

\section{Parameter-robust stability of the continuous problems}
For the rest of this paper we will discuss a system of the form \eqref{eq:red-eq}, where the parameter $\mu' = 1$. By omitting the primes on the parameters we obtain a system of the form
\begin{align}
\begin{split} \label{eq:scaled}
 - \div ( \e(\bs{u}) - \lambda \div \bs{u} \ul{\bs{I}} - \alpha p_F \ul{\bs{I}})  &= \bs{f}, \\
- \frac{\alpha^2}{\lambda} p_F - \alpha \div \bs{u} + \div (\kappa \nabla p_F) &= g. 
\end{split}
\end{align}
Throughout the rest of the paper we will assume that the parameters $\lambda$, $\alpha$, and $\kappa$ satisfy 
\begin{equation} \label{eq:param-range}
1 \leq \lambda < + \infty, \qquad  0 < \alpha \leq 1,    \qquad    0 <  \kappa \leq 1,
\end{equation}
where the assumption on the matrix valued function $\kappa$ has the interpretation that $\kappa$ is uniformly positive definite, and  with all eigenvalues bounded above by one.

\subsection{\tgreen{Difficulties} in typical formulations} 

In the discussion of the two examples in Section 2 we saw that simplified versions of the Biot system  were efficiently handled with 
straightforward extensions of standard preconditioners for Stokes problem. In particular, in Example   
\ref{thm:eg1}, the presence of a small permeability was handled by extending a standard Stokes preconditioner in the canonical way with an
operator of the form  
$\nabla\cdot(\kappa\nabla)$ applied to the pressure. Furthermore, in Example \ref{thm:eg2}, the use of ``solid pressure'' gave  a stable formulation and an efficient preconditioner even in 
the limit of an incompressible material. We will now demonstrate that extending these two approaches to the Biot system is not straightforward.

For simplicity we will first consider the system  with  homogeneous Dirichlet boundary conditions of the form  $\bs{u} = 0$ and $p_F = 0$ on $\pd \Omega$.
A discussion of other possible boundary conditions is given in Remark~\ref{rmk:general-bc} below. Throughout this subsection we will
make the simplifying assumption that $\lambda$ is a constant, and we will illustrate that parameter-robust preconditioning is difficult even in
that case. For a variational formulation of \eqref{eq:scaled} with Dirichlet boundary conditions, we will use the function spaces ${\bs H}_0^1$ and $H_0^1$ for the unknowns $\bs{u}$ and $p_F$, respectively, and obtain
\algns{  
(\e(\bs{u}), \e(\bs{v})) + \lambda (\div \bs{u}, \div \bs{v}) - \alpha (p_F , \div \bs{v}) &= (\bs{f}, \bs{v}), & & \forall \bs{v} \in \bs{H}_0^1, \\
- \alpha (\div \bs{u}, q_F) - \frac{\alpha^2}{\lambda} (p_F, q_F) - (\kappa \nabla p_F, \nabla q_F) &= (g, q_F), & & \forall q_F \in H_0^1 .
}
In matrix form, the system is 
\begin{align} \label{eq:2f-A}
\mathcal{A} \begin{pmatrix}
\bs{u} \\
p_F
\end{pmatrix} := 
\begin{pmatrix}
-\div (\e + \lambda \div \ul{\bs{I}}) & \alpha \nabla \\
-\alpha \div & - \frac{\alpha^2}{\lambda} + \div (\kappa \nabla) 
\end{pmatrix}
\begin{pmatrix}
\bs{u} \\
p_F
\end{pmatrix} = 
\begin{pmatrix}
\bs{f} \\
g 
\end{pmatrix} .
\end{align}
This system has a perturbed saddle point problem structure. Following the preconditioner construction framework in the previous section, we define Hilbert spaces $\bs{V}_1$ and $Q_{F,1}$ with parameter-dependent norms on $\bs{H}_0^1$ and $H_0^1$,
\begin{align*}
\| \bs{v} \|_{\bs{V}_1}^2 &:= \| \e(\bs{v}) \|_0^2 + \lambda \| \div \bs{v} \|_0^2, & & \bs{v} \in {\bs H}_0^1, \\ 
\| q_F \|_{Q_{F,1}}^2 &:= \frac{\alpha^2}{\lambda} \| q_F \|_0^2 + (\kappa \nabla q_F, \nabla q_F), & & q_F \in H_0^1.
\end{align*}
Then we are able to show that $\mathcal{A} : \bs{V}_1 \times Q_{F,1} \ra \bs{V}_1^* \times Q_{F,1}^*$ in \eqref{eq:2f-A} is an isomorphism with upper and lower bounds uniform in $\lambda$, $\kappa$, and $\alpha$.
However, this formulation has a nontrivial difficulty to achieve parameter-robust preconditioner for its discrete counterpart. For example, if we consider a block-diagonal preconditioner as in examples in the previous section, we need a good preconditioner of $- \div \e - \lambda \grad \div$ in first block of the preconditioner. However, usually accepted preconditioners (e.g., algebraic multigrid preconditioner) do not perform well when $\lambda$ is large. This is observed in \cite{MR3249373} for the simplified McKenzie equations, and can be explained by the fact that it is hard to construct discretizations which are 
 uniformly stable with respect to $\lambda$.

This difficulty is similar to volumetric locking problem in linear elasticity \cite{MR1174468}, which arises when $\lambda$ is very large. Thus, we expect that resolutions of the locking problem in elasticity are useful to circumvent this preconditioning problem. 
There are two mathematically equivalent ways to avoid the locking problem: one is reduced integration technique \cite{Malkus-Hughes-1978} and the other is the mixed method (see, e.g., \cite{Boffi-stokes,MR1140646,MR0343661,MR1950614}). However, both of them have technical difficulties in parameter-robust preconditioning. Here we will discuss the difficulty 
with the mixed approach.
%
Motivated by Example \ref{thm:eg2} and the mixed finite element technique to avoid   
the locking problem in linear elasticity, it is tempting to employ the ``solid pressure'' $p_S := \tblu{-\lambda \div \bs{u}}$.  Recall that our purpose here is to show that this formulation is \tblu{{\it not}} appropriate for parameter-robust preconditioner construction, \tblu{so we show the lack of stability and bad numerical results only for one specific case $\alpha = 1$}. Introducing $p_S = \tblu{-\lambda \div \bs{u}}$ in the first equation of \eqref{eq:scaled}, with some algebraic manipulation, we have a three-field formulation 
\begin{align}
\notag -\div \e(\bs{u}) + \nabla p_S + \nabla p_F &= \bs{f}, \\
\label{eq:3f-eq} -\div \bs{u} - \frac{1}{\lambda} p_S   &= 0, \\
\notag \tblu{- \div {\bs{u}} } - \frac{1}{\lambda} p_F + \div (\kappa \nabla p_F) &= g.
\end{align}
Since $\bs{u} \in {\bs H}_0^1$, $\div \bs{u}$ is mean-value zero, and therefore $p_S = \tblu{-\lambda \div \bs{u}}$ is mean-value zero as well. This means that, $L_0^2$ is an appropriate function space for $p_S$ in variational formulation. 
Thus, a variational form of \eqref{eq:3f-eq} is to find $(\bs{u}, p_S, p_F) \in {\bs H}_0^1 \times L_0^2 \times H_0^1$ such that 
\begin{align*}
(\e(\bs{u}), \e(\bs{v})) - (p_S + p_F, \div \bs{v}) &= (\bs{f}, \bs{v}), & & \forall \bs{v} \in {\bs H}_0^1, \\
-(\div \bs{u}, q_S) - \frac{1}{\lambda} (p_S, q_S) &= 0, & & \forall q_S \in L_0^2, \\
-(\div \bs{u}, q_F) - \frac{1}{\lambda} ( p_F, q_F ) - (\kappa \nabla p_F, \nabla q_F) &=  (g, q_F), & & \forall q_F \in H_0^1. 
\end{align*}
In order to have parameter-robust stability, we need to find parameter-dependent norms of ${\bs H}_0^1 \times L_0^2 \times H_0^1$ such that the above system gives a linear isomorphism from the Hilbert space to its dual space, and norms of the linear isomorphism and its inverse are independent of the parameters. The bilinear forms in the system, 
\begin{align}
\label{eq:3f-bil1} &(\e(\bs{u}), \e(\bs{v})), & & \bs{u}, \bs{v} \in  {\bs H}_0^1 , \\
\label{eq:3f-bil2} &(\div \bs{v}, p_S), & & \bs{v} \in {\bs H}_0^1, p_S \in L_0^2, \\
\label{eq:3f-bil3} &(\div \bs{v}, p_F), & & \bs{v} \in {\bs H}_0^1, p_F \in H_0^1, \\
\notag &\frac{1}{\lambda} (p_S, q_S), & & p_S, q_S \in  L_0^2 , \\
\label{eq:3f-bil5} &\frac{1}{\lambda} (p_F, q_F) - ( \kappa \nabla p_F, \nabla q_F), & & p_F, q_F \in H_0^1 ,
\end{align}
have to be bounded for the parameter-dependent norms, so some necessary conditions of the norms will be given. The bilinear form \eqref{eq:3f-bil1} suggests $H^1$-norm for ${\bs H}_0^1$. To make \eqref{eq:3f-bil2} and \eqref{eq:3f-bil3} bounded, the chosen norms of $L_0^2$ and $H_0^1$ have to bound the standard $L^2$-norms of $p_S$ and $p_F$, respectively. Finally, \eqref{eq:3f-bil5} enforces the norm of $H_0^1$ to be an upper bound of $(\kappa \nabla p_F, \nabla p_F)^{1/2}$. Thus the smallest possible norms for ${\bs H}_0^1$, $L_0^2$, $H_0^1$ from these observations are 
\begin{align} \label{eq:3fps-norms}
\| \e(\bs{u}) \|_0, \qquad \| p_S \|_0, \qquad \left( \| p_F \|_0^2 + (\kappa \nabla p_F, \nabla p_F) \right)^{\half}
\end{align}
for $\bs{u} \in {\bs H}_0^1$, $p_S \in L_0^2$, $p_F \in H_0^1$. 

We use ${\bs V}_2$, $Q_S$, $Q_{F,2}$ to denote the Hilbert spaces on ${\bs H}_0^1$, $L_0^2$, $H_0^1$ with the above norms. It is easy to check that all bilinear forms are bounded with these norms uniformly in $\lambda$ and $\kappa$. In other word, the linear map from ${\bs V}_2 \times Q_S \times Q_{F,2}$ to ${\bs V}_2^* \times Q_S^* \times Q_{F,2}^*$, given by the above three-field formulation has a uniform bound independent of the parameters. Unfortunately, it does not seem to be the case for the inverse of the linear map. 
Although the system is a stabilized saddle point problem, the stabilization terms $- \lambda^{-1} (p_S, q_S)$ and $- \lambda^{-1} (p_F, q_F)$ are not enough to control the $L^2$ norms of $p_S$ and $p_F$ when $\lambda$ is very large. Therefore, we need to control the norms by $\bs{v} \in {\bs V}_2$ with inf-sup condition. In other words, we need  
\begin{align} \label{eq:missing-inf-sup}
\inf_{(q_S, q_F) \in Q_S \times Q_{F,2}} \sup_{\bs{v} \in {\bs V}_2} \frac{(\div \bs{v}, q_S + q_F)}{\| \bs{v} \|_1 (\| q_S \|_0 + \| q_F \|_0)} \ge \beta >0,
\end{align}
with a constant $\beta$ which is independent of the parameters. 
However, both $q_S$ and $q_F$ interact with $\div \bs{v}$ in the bilinear form $(\div \bs{v}, q_S + q_F)$, and it is difficult to control two independent terms with one object, $\div \bs{v}$. When $\kappa$ is not small the stabilization term $(\kappa \nabla p_F, \nabla p_F)$ can be used to control the $L^2$-norm of $p_F$. However, as we have seen in the model reduction in the previous section, the smallness of $\kappa$ is given not only by small hydraulic conductivity in the model, but also by a small time-step. Thus it is inevitable to assume that $\kappa$ is small when we solve static problems at each time step. 

\begin{eg} \normalfont
We present a computational result which provides numerical evidence for the above discussion. Suppose that $\Omega$ is the unit square in $\R^2$ and 
\begin{align*} 
\Gamma_d = \{ (x, y) \in \R^2 \;:\; (x,y) \in \pd \Omega, x < 1 \}. 
\end{align*}
For simplicity of implementation, we assume that $\bs{u}$ is vanishing on $\Gamma_d$, not on $\pd \Omega$, and therefore the appropriate function space for $p_S$ is $L^2$, since $\lambda \div \bs{u} \not \in L_0^2$. This is a reasonable assumption since robust preconditioners should cover problems with general boundary conditions. 
\begin{table}[ht]  
\caption{\small{Number of iterations for different \tblu{$\lambda$ and $\kappa$} with the preconditioner of the form in \eqref{eq:3f-ps-precond}. ($\Omega$ = unit square, partitioned as bisections of $N \times N$ rectangles, convergence criterion with relative residual of $10^{-6}$)}} \label{3f-ps-exp}
\centering
\begin{tabular}{>{\small}c >{\small}c | >{\small}c >{\small}c >{\small}c >{\small}c >{\small}c >{\small}c >{\small}c}
\hline 
& & \multicolumn{7}{>{\small}c}{$k$ \quad$(\kappa = 10^{k})$} \\ 
$N$& $\lambda$ & $0$ & $-1$ & ${-2}$ & $-3$ & ${-4}$ & $-5$ & ${-6}$	 \\  
\hline \hline
\multirow{4}{*}{$32$} 
& $10^0$ & $21$  & $20$ & $21$  &  $ 22$& $ 24$ & $ 25$& $ 25$  \\ 
& $10^2$ & $45$  & $46$ & $49$  &  $ 88$& $115$ & $111$& $ 84$  \\ 
& $10^4$ & $46$  & $46$ & $50$  &  $102$& $247$ & $359$& $198$  \\ 
& $10^6$ & $46$  & $46$ & $53$  &  $102$& $251$ & $451$& $238$  \\ \hline \multirow{4}{*}{$64$} 
& $10^0$ & $20$  & $20$ & $20$  &  $21$& $ 23$ & $ 23$& $ 24$  \\ 
& $10^2$ & $44$  & $44$ & $49$  &  $85$& $107$ & $107$& $ 92$  \\ 
& $10^4$ & $44$  & $44$ & $48$  &  $98$& $242$ & $315$& $207$  \\ 
& $10^6$ & $44$  & $44$ & $50$  &  $98$& $244$ & $370$& $231$  \\ \hline \multirow{4}{*}{$128$} 
& $10^0$ & $19$ &$19$ & $19$ & $21$& $ 22$ & $ 23$& $ 23$  \\ 
& $10^2$ & $40$ &$40$ & $44$ & $82$& $ 97$ & $ 99$& $ 93$  \\ 
& $10^4$ & $42$ &$42$ & $47$ & $94$& $220$ & $241$& $181$  \\ 
& $10^6$ & $40$ &$41$ & $46$ & $94$& $228$ & $262$& $189$  \\ 
\hline 
\end{tabular}                           
\end{table}
For discretization we use elements inspired by the lowest order Taylor--Hood element, i.e., $(\bs{\mathcal{P}}_2, \mathcal{P}_1, \mathcal{P}_1)$ Lagrange finite elements for $(\bs{u}, p_S, p_F)$. In the experiment, block-diagonal preconditioner $\mathcal{B}$ based on the norms in \eqref{eq:3fps-norms}, i.e., 
\begin{align} \label{eq:3f-ps-precond}
\mathcal{B} = 
\begin{pmatrix} 
- \lap^{-1} & 0 & 0 \\
0 & I^{-1}& 0 \\
0 & 0 & \left(  I - \kappa \lap \right)^{-1}
\end{pmatrix} 
\end{align}
is implemented using FEniCS with Hypre. 
As above, the exact inverses are replaced by suitable algebraic multigrid operators.
A heuristic way to validate this choice of preconditioner is to consider a special case of \eqref{eq:3f-eq} with $\lambda=1$. Noting that this special case has the systems in Example~\ref{thm:eg1} with $\lambda=1$ and in Example~\ref{thm:eg2} as subsystems, the operator $\mathcal{B}$ in \eqref{eq:3f-ps-precond} can be viewed as a combination of the robust preconditioners in those examples.

Numbers of iterations for different constant parameter values are given in Table~\ref{3f-ps-exp}. When $\lambda$ is not too large the numbers of iterations are more or less robust with respect to  variation of $\kappa$. 
However, the number of iterations clearly increases with  increasing $\lambda$, and the increment is quite large when $\kappa$ is small. The growth in iterations as $\lambda$ increases is milder when $\kappa$ is larger, which supports  our heuristic analysis of ``partial remedy'', that $(\kappa \nabla p_F, \nabla p_F)$ may play a role of a stabilization term for the $L^2$ norm of $p_F$.  
\end{eg}

\subsection{A new three-field formulation} 
The discussion in the previous subsection suggests that we need a different formulation to obtain  parameter-robust preconditioners. 
We will present such a new formulation of the system \eqref{eq:scaled} here, where parameters $\lambda$, $\alpha$, $\kappa$ are assumed to be as specified in the beginning of this section. In particular, $\lambda$ and $\kappa$ are allowed to be functions of the spatial domain.

The main obstacle in the discussion above was  that the inf-sup condition \eqref{eq:missing-inf-sup} is not fulfilled. To circumvent this, we use a different system with unknowns $(\bs{u}, p_T, p_F)$, where \tblu{$p_T := -\lambda \div \bs{u} + \alpha p_F$}. With this new unknown $p_T$, which will be called total pressure, we can rewrite the equations \eqref{eq:scaled} as 
\begin{align}
\notag -\div \e(\bs{u}) + \nabla p_T &= \bs{f}, \\
\notag -\div \bs{u} - \lambda^{-1} (p_T - \alpha p_F)   &= 0, \\
\notag \lambda^{-1} (\alpha p_T - 2 \alpha^2 p_F) + \div (\kappa \nabla p_F) &= g.
\end{align}
The matrix form of this system is 
\begin{align} \label{eq:n3f-mat}
{\mathcal{A}} \begin{pmatrix}
\bs{u} \\
p_T \\
p_F
\end{pmatrix} := 
\begin{pmatrix}
-\div \e & \nabla & 0 \\
-\div  & -\lambda^{-1} & \alpha \lambda^{-1} \\
0 & \alpha \lambda^{-1} & - 2 \alpha^2 \lambda^{-1} + \div (\kappa \nabla) 
\end{pmatrix}
\begin{pmatrix}
\bs{u} \\
p_T \\
p_F
\end{pmatrix} = 
\begin{pmatrix}
\bs{f} \\
0  \\
-g
\end{pmatrix} .
\end{align}
With function spaces $\bs{H}_0^1$, $L^2$, $H_0^1$ for $\bs{u}$, $p_T$, $p_F$, respectively, 
corresponding to Dirichlet boundary conditions for $\bs{u}$ and  $p_F$,
we obtain a variational form 
\begin{align}
\notag (\e(\bs{u}), \e(\bs{v})) - (\div \bs{v}, \tblu{p_T}) &= (\bs{f}, \bs{v}), & & \bs{v} \in \bs{H}_0^1, \\
\label{eq:n3f-var} - (\div \bs{u}, q_T) - (\lambda^{-1} p_T, q_T) + (\alpha \lambda^{-1} p_F, q_T) &= 0, & & q_T \in L^2, \\
\notag (\alpha \lambda^{-1} p_T, q_F) -  2 (\alpha^2 \lambda^{-1} p_F, q_F) - (\kappa \nabla p_F, \nabla q_F) &= (g, q_F), & & q_F \in H_0^1.
\end{align}
Recall \tblu{that we used the decomposition $p = p_m + p_0$ and the stabilization term in order to obtain the stability of the system in Example~\ref{thm:eg2}. We need a similar argument to show the stability of \eqref{eq:n3f-var} due to the same reason, $\div \bs{H}_0^1 \subsetneq L^2$. 
Denoting the mean-value zero part of $q_T$ by $q_{T,0}$ as in \eqref{eq:decomp},} we define norms by 
\begin{align} \label{eq:n3f-norms}
\| \e(\bs{u}) \|_0 , \quad \left( (\lambda^{-1} p_T,  p_{T}) + \| p_{T,0} \|_0^2 \right)^{\half}, \quad \left ( (\alpha^2 \lambda^{-1} p_F, p_F) + (\kappa \nabla p_F, \nabla p_F) \right)^{\half}
\end{align}
for $\bs{u} \in \bs{H}_0^1$, $p_T \in L^2$, $p_F \in H_0^1$. Let us denote these spaces with the norms in \eqref{eq:n3f-norms} by $\bs{V}$, $Q_T$, $Q_F$, and let $\mathcal{X} = \bs{V} \times Q_T \times Q_F$. 
Then it can be shown that all the bilinear forms in \eqref{eq:n3f-var}  are uniformly bounded in  $\lambda$, $\alpha$, $\kappa$. In other words, 
the operator ${\mathcal{A}}$ appearing in \eqref{eq:n3f-mat} is a bounded linear map from $\mathcal{X}$ to $\mathcal{X}^*$ and its norm is independent of the three parameters. Here the norm on the space $\mathcal{X}^*$ is derived from the norm on $\mathcal{X}$
exactly as we explained in Example~\ref{thm:eg1}.

The following theorem implies that ${\mathcal{A}}$ is invertible and that the inverse is a map from $\mathcal{X}^*$ to $\mathcal{X}$
with operator norm independent of the three  parameters. 

\begin{theorem} \label{thm:3f-inf-sup}
For the system \eqref{eq:n3f-var} there exists a constant ${\beta} >0$, independent of $\lambda$, $\alpha$, $\kappa$ satisfying \eqref{eq:param-range}, such that the following  inf-sup condition holds: 
\begin{align*}
\inf_{ (\bs{u}, p_T, p_F) \in \mathcal{X} } 
\sup_{(\bs{v}, q_T, q_F) \in \mathcal{X}} 
\frac{( {\mathcal{A}} (\bs{u}, p_T, p_F), (\bs{v}, q_T, q_F))_{(\mathcal{X}^*, \mathcal{X})} } 
{\| (\bs{u}, p_T, p_F) \|_{\mathcal{X}} \| (\bs{v}, q_T, q_F) \|_{\mathcal{X}} }  \geq {\beta} .
\end{align*}
\end{theorem}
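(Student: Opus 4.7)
The strategy is to establish the inf-sup condition by explicit construction: for each trial triple $(\bs{u}, p_T, p_F) \in \mathcal{X}$, I will exhibit a test triple $(\bs{v}, q_T, q_F) \in \mathcal{X}$ such that the duality pairing $(\mathcal{A}(\bs{u},p_T,p_F), (\bs{v},q_T,q_F))_{(\mathcal{X}^*,\mathcal{X})}$ is bounded below by a fixed multiple of $\|(\bs{u},p_T,p_F)\|_\mathcal{X}^2$ while simultaneously $\|(\bs{v},q_T,q_F)\|_\mathcal{X} \lesssim \|(\bs{u},p_T,p_F)\|_\mathcal{X}$, with all constants independent of $\lambda$, $\alpha$, $\kappa$ in the range \eqref{eq:param-range}. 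Dividing through yields the desired $\beta$.

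First I would evaluate the bilinear form $B$ on the diagonal choice $\bs{v} = \bs{u}$, $q_T = -p_T$, $q_F = -p_F$. The two off-diagonal contributions $-(p_T, \div \bs{u})$ and $-(\div \bs{u}, -p_T)$ cancel, and after completing the square in the coupled pressure block the pairing reduces to
\[
\|\e(\bs{u})\|_0^2 + \lambda^{-1}\|p_T - \alpha p_F\|_0^2 + \alpha^2 \lambda^{-1}\|p_F\|_0^2 + (\kappa \nabla p_F, \nabla p_F).
\]
Since the $2\times 2$ quadratic form $(r,s) \mapsto (r-s)^2 + s^2$ is positive definite, a pointwise application with $r = p_T$ and $s = \alpha p_F$ gives an absolute constant $c>0$ with $\lambda^{-1}\|p_T - \alpha p_F\|_0^2 + \alpha^2 \lambda^{-1}\|p_F\|_0^2 \geq c\,(\lambda^{-1}\|p_T\|_0^2 + \alpha^2 \lambda^{-1}\|p_F\|_0^2)$. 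Thus this first test already controls every contribution to $\|(\bs{u},p_T,p_F)\|_\mathcal{X}^2$ except the missing $\|p_{T,0}\|_0^2$ piece.

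To recover that term I would invoke the standard, parameter-free surjectivity of $\div : \bs{H}_0^1 \to L_0^2$ to pick $\bs{w} \in \bs{H}_0^1$ with $\div \bs{w} = p_{T,0}$ and $\|\e(\bs{w})\|_0 \lesssim \|p_{T,0}\|_0$, then enrich the test function to $\bs{v} = \bs{u} - \theta \bs{w}$ for a small $\theta > 0$ to be chosen later. The extra contribution to the pairing is $-\theta(\e(\bs{u}), \e(\bs{w})) + \theta(p_T, \div \bs{w})$; since $\bs{w} \in \bs{H}_0^1$ forces $\int_\Omega \div \bs{w} = 0$, the second term equals $\theta (p_T, p_{T,0}) = \theta \|p_{T,0}\|_0^2$. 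Absorbing the cross term by Young's inequality and choosing $\theta$ sufficiently small (depending only on the surjectivity constant) leaves a strictly positive coefficient in front of $\|p_{T,0}\|_0^2$ while preserving a positive fraction of $\|\e(\bs{u})\|_0^2$.

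The matching bound on the test-function norm is immediate: $\|\e(\bs{u} - \theta \bs{w})\|_0 \leq \|\e(\bs{u})\|_0 + \theta\|\e(\bs{w})\|_0 \lesssim \|\e(\bs{u})\|_0 + \|p_{T,0}\|_0$, and $(q_T, q_F) = (-p_T, -p_F)$ produces the same $Q_T$- and $Q_F$-norms as the trial. Combined with the lower bound this yields the claim. I expect the delicate point to be the simultaneous parameter-uniform coercivity of the $(p_T, p_F)$ block: because $\alpha$ may be arbitrarily small, the diagonal contribution $2\alpha^2\lambda^{-1}\|p_F\|_0^2$ does not by itself dominate the $\alpha$-weighted coupling, so one must rely on the completed square together with the quantitative $2\times 2$ inequality above to extract uniform control of $\lambda^{-1}\|p_T\|_0^2$ and $\alpha^2\lambda^{-1}\|p_F\|_0^2$ at the same time.
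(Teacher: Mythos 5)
Your proposal is correct and follows essentially the same route as the paper's proof: test with $(\bs{u}-\theta\bs{w},-p_T,-p_F)$, where $\bs{w}$ comes from the divergence/Stokes surjectivity with $(\div\bs{w},p_T)=\|p_{T,0}\|_0^2$, absorb the elasticity cross term by Young's inequality, and verify the matching norm bound on the test function. The only difference is cosmetic: you handle the coupled pressure block by completing the square $\lambda^{-1}\|p_T-\alpha p_F\|_0^2+\alpha^2\lambda^{-1}\|p_F\|_0^2$ and invoking positive definiteness of the $2\times2$ form, whereas the paper applies Young's inequality with weight $\theta_1=3/8$ to reach the same uniform coercivity of that block.
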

\begin{proof}
To prove the inf-sup condition, we will use a standard technique: For given $(0, 0, 0) \not = (\bs{u}, p_T, p_F) \in \mathcal{X}$, we will find $(\bs{v}, q_T, q_F) \in \mathcal{X}$ such that 
\begin{align}
\label{eq:inf-sup-sub1} \| (\bs{v}, q_T, q_F) \|_{\mathcal{X}} &\leq C_1 \| (\bs{u}, p_T, p_F) \|_{\mathcal{X}}, \\
\label{eq:inf-sup-sub2} (\tblu{\mathcal{A}} (\bs{u}, p_T, p_F), (\bs{v}, q_T, q_F))_{(\mathcal{X}^*, \mathcal{X})} &\geq C_2 \tblu{\| (\bs{u}, p_T, p_F) \|_{\mathcal{X}}^2} ,
\end{align}
with positive constants $C_1$, $C_2$ which are independent of $\lambda$, $\kappa$, and $\alpha$. 
From these two inequalities we obtain that the desired inf-sup condition holds with $\beta= C_2/C_1$.

Suppose that $(0, 0, 0) \not = (\bs{u}, p_T, p_F) \in \mathcal{X}$ is given. Recall that $p_{T,0}$ is the mean-value zero part of $p_T$. It is well-known from the theory of Stokes equation, cf. \cite[Theorem~5.1]{girault-raviart} that there exists a constant $\beta_0 >0$, depending only on the domain $\Omega$, and $\bs{w} \in \bs{V}$, such that 
\begin{align} \label{eq:w-estm2}
(\div \bs{w}, p_T) = \| p_{T,0} \|_0^2, \qquad (\e(\bs{w}), \e(\bs{w})) \leq \beta_0^{2} \| p_{T,0} \|_0^2 .
\end{align}
We set $\bs{v} = \tblu{\bs{u} - \delta_0 \bs{w}}$, $q_T = - p_T$, $q_F = -p_F$ with a constant $\delta_0$ which will be determined later. One can check that 
\begin{align} 
\notag \| (\bs{v}, q_T, q_F) \|_{\mathcal{X}} \leq \tblu{\sqrt{2(1 + \delta_0^2 \beta_0^2)}} \| (\bs{u}, p_T, p_F) \|_{\mathcal{X}} , 
\end{align}
and \eqref{eq:inf-sup-sub1} follows, if  $\delta_0$ is independent of the parameters of our interest. 

To establish \eqref{eq:inf-sup-sub2} and determine $\delta_0$, we use the chosen $\bs{v}$, $q_T$, $q_F$, and \eqref{eq:w-estm2} to obtain
\begin{align} 
\notag &( {\mathcal{A}} (\bs{u}, p_T, p_F), (\bs{v}, q_T, q_F))_{(\mathcal{X}^*, \mathcal{X})} \\
\notag &= \| \bs{u} \|_{\bs{V}}^2 \tblu{-} \delta_0 (\e(\bs{u}), \e(\bs{w})) + \delta_0 (\div \bs{w}, p_T)  \\
\label{eq:inf-sup-bilinear}& \quad + ( (\lambda^{-1} p_T, p_T) + 2 (\alpha^2 \lambda^{-1} p_F, p_F) - 2 (\alpha \lambda^{-1}p_T, p_F)) + (\kappa \nabla p_F, \nabla p_F)  \\
\notag &= \| \bs{u} \|_{\bs{V}}^2 \tblu{-} \delta_0 (\e(\bs{u}), \e(\bs{w})) + \delta_0 \| p_{T,0} \|_0^2  \\
\notag & \quad + ( ( \lambda^{-1} p_T, p_T) + 2 (\alpha^2 \lambda^{-1} p_F, p_F) - 2 (\alpha \lambda^{-1}p_T, p_F)) + (\kappa \nabla p_F, \nabla p_F) .
\end{align}
By Young's inequality and \eqref{eq:w-estm2}, we also have
\begin{align*}
& (\e(\bs{u}), \e(\bs{w})) \leq \frac{1}{2\theta_0} \| \bs{u} \|_{\bs{V}}^2 + \frac{\theta_0}{2} \| \bs{w} \|_{\bs{V}}^2 \leq \frac{1}{2\theta_0} \| \bs{u} \|_{\bs{V}}^2 + \frac{\theta_0 \beta_0^2}{2} \| p_{T,0} \|_0^2 ,\quad \forall \theta_0 >0.
\end{align*}
Using the above inequality with the choice $\theta_0 = \delta_0 = \beta_0^{-2}$, we derive 
\begin{align} \label{eq:inf-sup-aux2}
\| \bs{u} \|_{\bs{V}}^2 \tblu{-} \delta_0 (\e(\bs{u}), \e(\bs{w})) + \delta_0 \| p_{T,0} \|_0^2 \geq \half \| \bs{u} \|_{\bs{V}}^2 + \frac{\delta_0}{2 } \| p_{T,0} \|_0^2.
\end{align}
Again by Young's inequality, for any $\theta_1 >0$,
\begin{align*}
2 (\alpha \lambda^{-1} p_T, p_F) = 2 (\lambda^{-1/2} p_T, \alpha \lambda^{-1/2} p_F) \leq 2 \theta_1 (\lambda^{-1} p_T, p_T) + \frac{1}{2 \theta_1} (\alpha^2 \lambda^{-1} p_F, p_F) .
\end{align*}
If we take $\theta_1 = 3/8$, then we get 
\begin{multline} \label{eq:inf-sup-aux3}
( ( \lambda^{-1} p_T, p_T) + 2 (\alpha^2 \lambda^{-1} p_F, p_F) - 2 (\alpha \lambda^{-1}p_T, p_F)) \\
\geq  \frac{1}{4} (\lambda^{-1} p_T, p_T) + \frac{2}{3} (\alpha^2 \lambda^{-1} p_F, p_F ) . 
\end{multline}
The inequality \eqref{eq:inf-sup-sub2} is obtained by combining \eqref{eq:inf-sup-bilinear}, \eqref{eq:inf-sup-aux2}, and \eqref{eq:inf-sup-aux3}. Finally, we remark that the constants $C_1$ and $C_2$ in \eqref{eq:inf-sup-sub1}--\eqref{eq:inf-sup-sub2} depend only on $\delta_0$, which is $\beta_0^{-2}$ in the argument, so they are independent of $\lambda$, $\alpha$, $\kappa$. 
\end{proof}
\begin{rmk} \label{rmk:general-bc} \normalfont
The set up in 
Theorem~\ref{thm:3f-inf-sup} is suitable for homogeneous Dirichlet boundary conditions for the displacement $\bs{u}$ and the fluid pressure $p_F$. 
However, similar result can be obtained for more general boundary conditions. For this, we first review possible boundary conditions for Biot's model. Suppose that there are two partitions of $\pd \Omega$,
\begin{align} \label{eq:bdy-decomp}
\pd \Omega = \Gamma_p \cup \Gamma_f, \qquad \pd \Omega = \Gamma_d \cup \Gamma_t,
\end{align}
with $| \Gamma_p |,| \Gamma_d | > 0$, i.e., the Lebesgue measures of $\Gamma_p$ and $\Gamma_d$ are positive. General homogeneous boundary conditions of \eqref{eq:2-field-eq} are given by
\begin{align}
\notag 
\begin{split}
&p_F(t) = 0 \quad \text{ on } \Gamma_p, \quad - \kappa \nabla p_F(t) \cdot \bs{n} = 0 \quad\text{ on } \Gamma_f, \\
& \bs{u}(t) = 0 \quad\text{ on } \Gamma_d, \quad \ul{\bs{\sigma}}(t) \bs{n} = 0 \quad\text{ on } \Gamma_t,
\end{split}
\end{align}
for time variable $t \in [0, T]$, $T > 0$, in which $\bs{n}$ is the outward unit normal vector field on $\pd \Omega$ and $\ul{\bs{\sigma}}(t) := 2 \mu \e(\bs{u}(t)) + (\lambda \div \bs{u}(t) - \alpha p_F(t)) \ul{\bs{I}}$. The conditions for $p_F$ is a combination of pressure-flux boundary condition as in Darcy flow and the conditions for $\bs{u}$ is a combination of displacement-traction boundary conditions as in elasticity problems. The proper function spaces for this variational formulation are  
\begin{align} \label{eq:space-bc}
\bs{V} := \{ \bs{v} \in \bs{H}^1 \;:\; \bs{v}|_{\Gamma_d} = 0 \}, \quad Q_T = L^2, \quad Q_F := \{ q_F \in {H}^1 \;:\; q_F|_{\Gamma_p} = 0 \}, 
\end{align}
for $\bs{u}$, $p_T$, and $p_F$. 
When $\Gamma_d \not = \pd \Omega$, we choose parameter-dependent norms by 
\begin{align} \label{eq:n3f-norms2}
\| \e(\bs{u}) \|_0 , \quad \| p_{T} \|_0 , \quad \left ( (\alpha^2 \lambda^{-1} p_F, p_F) + (\kappa \nabla p_F, \nabla p_F) \right)^{\half} .
\end{align} 
We can prove a stability result similar to Theorem~\ref{thm:3f-inf-sup} with these norms. In fact, the  proof is easier in this case since the inf-sup condition
\begin{align} \label{eq:stokes-inf-sup}
 \inf_{p_T \in L^2} \sup_{\bs{v} \in \bs{H}_{\Gamma_d}^1} \frac{(\div \bs{v}, p_T)}{\| \bs{v} \|_1 \| p_T \|_0} \geq \beta_0 
\end{align}
holds, and therefore a decomposition of $p_T$, into its mean-value and mean-value zero components, is not necessary. We omit the details since the proof  is completely analogous to the proof of Theorem~\ref{thm:3f-inf-sup}. 
\end{rmk}

\section{Discretization and construction of preconditioners} \label{sec:precond}
In this section we propose finite element discretizations of the three field formulation introduced above,  and show that parameter-robust preconditioners for the discrete problems can be found. In contrast to the discussion above, we will
first consider  problems with general boundary conditions, i.e., boundary conditions with $\Gamma_d \not = \pd \Omega$,
cf. \eqref{eq:bdy-decomp}.
The reason for this reversed order is that the construction of preconditioners in the case when  
$\Gamma_d  = \pd \Omega$, resulting in the choice $\bs{V} = \bs{H}_0^1$, requires a nontrivial technical discussion.

We have shown above that \eqref{eq:n3f-var} is a linear system with parameter-robust stability for the function spaces $\bs{V}$, $Q_T$, $Q_F$ with parameter-dependent norms given by  \eqref{eq:n3f-norms}. If we discretize the system \eqref{eq:n3f-var} with the finite element spaces $\bs{V}_h \subset \bs{V}$, ${Q}_{T,h} \subset Q_T$, $Q_{F,h} \subset Q_F$, then the discrete counterpart of \eqref{eq:n3f-var} is to find $(\bs{u}_h, p_{T,h}, p_{F,h}) \in \bs{V}_h \times {Q}_{T,h} \times Q_{F,h}$ such that 
\begin{align}
\notag (\e(\bs{u}_h), \e(\bs{v})) - (\div \bs{v}, p_{T,h}) &= (\bs{f}, \bs{v}), & & \forall \bs{v} \in \bs{V}_h, \\
\label{eq:n3f-vard} - (\div \bs{u}_h, q_T) - ({\lambda}^{-1} {p}_{T,h}, q_T) + (\alpha \lambda^{-1} p_{F,h}, q_T) &= 0, & & \forall q_T \in Q_{T,h}, \\
\notag (\alpha \lambda^{-1} p_{T,h}, q_F) - 2 (\alpha^2 \lambda^{-1} p_{F,h}, q_F) - (\kappa \nabla p_{F,h}, \nabla q_F) &= (g, q_F), & & \forall q_F \in Q_{F,h}.
\end{align}
A basic stability assumption for this discretization is that the pair  $\bs{V}_h \times Q_{T,h}$    satisfies 
a discrete version of \eqref{eq:stokes-inf-sup}, i.e., 
\begin{align} \label{eq:stokes-inf-sup-h}
 \inf_{p_T \in Q_{T,h}} \sup_{\bs{v} \in \bs{V}_h} \frac{(\div \bs{v}, p_T)}{\| \bs{v} \|_1 \| p_T \|_0} \geq \beta_0 >0,
\end{align}
where $\beta_0$ is independent of $h$. In other words, $\bs{V}_h \times Q_{T,h}$ is a stable Stokes pair.

\begin{theorem} \label{thm:general-bc}
Suppose that $\bs{V}$, $Q_T$, $Q_F$ are as in \eqref{eq:space-bc} with $\Gamma_d \not = \pd \Omega$ and $\Gamma_p$ as in \eqref{eq:bdy-decomp}, and that  $\bs{V}_h \subset \bs{V}$, $Q_{T,h} \subset Q_T$, $Q_{F,h} \subset Q_F$ are corresponding finite element spaces. Furthermore, assume that the pair $\bs{V}_h \times Q_{T,h}$
satisfies the inf-sup condition \eqref {eq:stokes-inf-sup-h}.  Let $\mathcal{X}_h = \bs{V}_h \times Q_{T,h} \times Q_{F,h}$ be the Hilbert space with norm given in \eqref{eq:n3f-norms2}, and ${\mathcal{A}}_h : \mathcal{X}_h \ra \mathcal{X}_h^*$ the operator given by \eqref{eq:n3f-vard}. Then there exists a constant $\beta>0$, such that 
\begin{align} \label{eq:3f-disc-inf-sup}
\inf_{ (\bs{u}, p_T, p_F) \in \mathcal{X}_h } 
\sup_{(\bs{v}, q_T, q_F) \in \mathcal{X}_h } 
\frac{( {\mathcal{A}_h} (\bs{u}, p_T, p_F), (\bs{v}, q_T, q_F))_{(\mathcal{X}_h^*, \mathcal{X}_h)} } 
{\| (\bs{u}, p_T, p_F) \|_{\mathcal{X}_h} \| (\bs{v}, q_T, q_F) \|_{\mathcal{X}_h} }  \geq {\beta} ,
\end{align}
for all parameters $\lambda$, $\alpha$, and $\kappa$ satisfying \eqref{eq:param-range}.
\end{theorem}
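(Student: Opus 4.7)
The plan is to mimic the proof of Theorem~\ref{thm:3f-inf-sup}: for an arbitrary nonzero $(\bs{u}, p_T, p_F) \in \mathcal{X}_h$ I will produce a test triple $(\bs{v}, q_T, q_F) \in \mathcal{X}_h$ satisfying the discrete analogs of \eqref{eq:inf-sup-sub1} and \eqref{eq:inf-sup-sub2}, with constants depending only on the inf-sup and Korn constants. Because $\Gamma_d \neq \pd \Omega$, the assumed discrete Stokes inf-sup condition \eqref{eq:stokes-inf-sup-h} holds on the full space $Q_{T,h}$ rather than only a mean-free subspace, and correspondingly the norm on $Q_T$ in \eqref{eq:n3f-norms2} is simply $\| p_T \|_0$. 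As noted in Remark~\ref{rmk:general-bc}, this makes the present argument strictly easier than the $\bs{H}_0^1$ case of Theorem~\ref{thm:3f-inf-sup}: no decomposition $p_T = p_{T,m} + p_{T,0}$ is needed.

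First I apply \eqref{eq:stokes-inf-sup-h} to $p_T \in Q_{T,h}$ to obtain $\bs{w}_h \in \bs{V}_h$ with $(\div \bs{w}_h, p_T) = \| p_T \|_0^2$ and $\| \bs{w}_h \|_1 \leq \beta_0^{-1} \| p_T \|_0$. Korn's inequality, uniformly valid on $\bs{V} \supset \bs{V}_h$ because $| \Gamma_d | > 0$, then yields $\| \e(\bs{w}_h) \|_0 \lesssim \| p_T \|_0$ with a constant independent of $h$. I then set $\bs{v} = \bs{u} - \delta_0 \bs{w}_h$, $q_T = -p_T$, $q_F = -p_F$, where $\delta_0 > 0$ is a small constant to be fixed below. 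The norm bound \eqref{eq:inf-sup-sub1} follows immediately from the triangle inequality together with the estimate on $\bs{w}_h$.

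For \eqref{eq:inf-sup-sub2}, expanding $(\mathcal{A}_h(\bs{u}, p_T, p_F), (\bs{v}, q_T, q_F))$ produces, after cancellation of the two $(\div \bs{u}, p_T)$ terms, the sum of the positive-semidefinite quantities $\| \e(\bs{u}) \|_0^2$, $\delta_0 \| p_T \|_0^2$, $(\lambda^{-1} p_T, p_T)$, $2(\alpha^2 \lambda^{-1} p_F, p_F)$, $(\kappa \nabla p_F, \nabla p_F)$, together with two indefinite cross terms $-\delta_0 (\e(\bs{u}), \e(\bs{w}_h))$ and $-2 (\alpha \lambda^{-1} p_T, p_F)$. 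I would control these by two Young's-inequality steps identical in structure to the continuous proof: the first, with $\delta_0$ fixed in terms of the Korn and inf-sup constants, absorbs the first cross term into $\tfrac{1}{2}\|\e(\bs{u})\|_0^2 + \tfrac{\delta_0}{2}\| p_T \|_0^2$ exactly as in \eqref{eq:inf-sup-aux2}; the second, with $\theta_1 = 3/8$ as in \eqref{eq:inf-sup-aux3}, dominates the second cross term so that positive coefficients survive on $(\lambda^{-1} p_T, p_T)$ and $(\alpha^2 \lambda^{-1} p_F, p_F)$. Combining these gives the desired lower bound on $\| (\bs{u}, p_T, p_F) \|_{\mathcal{X}_h}^2$.

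There is essentially no new obstacle relative to the continuous case. The two ingredients the argument rests on, namely the inf-sup constant $\beta_0$ in \eqref{eq:stokes-inf-sup-h} and Korn's constant on $\bs{V}_h$, are $h$-independent by assumption and by conformity $\bs{V}_h \subset \bs{V}$, respectively; the choices of $\delta_0$ and $\theta_1$ are exactly those of the continuous proof; and all parameter-dependent terms enter only through nonnegative quadratic forms, so the resulting inf-sup constant $\beta$ is independent of $h$, $\lambda$, $\alpha$, and $\kappa$ throughout the regime \eqref{eq:param-range}. The only conceptual point worth emphasizing is that without $| \Gamma_d | > 0$ one would lose both Korn's inequality and the full $L^2$-version of \eqref{eq:stokes-inf-sup-h}, and would have to revert to the more delicate mean-value decomposition of Theorem~\ref{thm:3f-inf-sup}.
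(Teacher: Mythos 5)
Your proposal is correct and follows exactly the route the paper intends: the paper omits the proof, stating it is completely analogous to Theorem~\ref{thm:3f-inf-sup}, and you carry out precisely that analogy, using the discrete inf-sup condition \eqref{eq:stokes-inf-sup-h} on all of $Q_{T,h}$ so that no mean-value decomposition of $p_T$ is needed, with the same test functions $(\bs{u}-\delta_0\bs{w}_h,-p_T,-p_F)$ and the same two Young's-inequality steps. One minor remark: Korn's inequality is not actually needed for the bound $\|\e(\bs{w}_h)\|_0 \lesssim \|p_T\|_0$, since $\|\e(\bs{w}_h)\|_0 \le \|\bs{w}_h\|_1 \le \beta_0^{-1}\|p_T\|_0$ follows directly, but this does not affect the validity of the argument.
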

We do not prove this result here since the proof   is completely analogous to the proof of Theorem~\ref{thm:3f-inf-sup}. 
We observe that the norms given in \eqref{eq:n3f-norms} shows that a preconditioner of the form
\begin{align} \label{eq:3f-precond2}
\mathcal{B} = 
\begin{pmatrix} 
- \lap^{-1} & 0 & 0 \\
0 & I^{-1}& 0 \\
0 & 0 & \left( \alpha^2\lambda^{-1} I - \div(\kappa \nabla) \right)^{-1}
\end{pmatrix} 
\end{align}
will be a parameter-robust preconditioner.

We now turn to the case with $\Gamma_d = \pd \Omega$ such that $\bs{V} = \bs{H}_0^1$. 
We recall that $L^2_0$ is  the space of $L^2$ functions with mean value zero.
The proper discrete inf-sup condition in this case takes the form 
\begin{align} \label{eq:stokes-inf-sup-h-D}
 \inf_{p_T \in Q_{T,h} \cap L^2_0} \sup_{\bs{v} \in \bs{V}_h} \frac{(\div \bs{v}, p_T)}{\| \bs{v} \|_1 \| p_T \|_0} \geq \beta_0 >0,
\end{align}
where again $\beta_0$ is independent of $h$. 

The following is a discrete analogue of Theorem~\ref{thm:3f-inf-sup}, and its proof is completely analogous to the proof of that theorem.

\begin{theorem} \label{thm:dirichlet-bc}
Suppose that $\bs{V}$, $Q_T$, $Q_F$ are as in \eqref{eq:space-bc} with $\Gamma_d = \pd \Omega$ and $\Gamma_p$ as in \eqref{eq:bdy-decomp}, and that  $\bs{V}_h \subset \bs{V}$, $Q_{T,h} \subset Q_T$, $Q_{F,h} \subset Q_F$ are corresponding finite element spaces. Furthermore, assume that the pair $\bs{V}_h \times Q_{T,h}$ satisfy the inf-sup condition 
\eqref{eq:stokes-inf-sup-h-D}. Let $\mathcal{X}_h = \bs{V}_h \times Q_{T,h} \times Q_{F,h}$ be the Hilbert space with norm 
given in \eqref{eq:n3f-norms}, and ${\mathcal{A}}_h : \mathcal{X}_h \ra \mathcal{X}_h^*$ the operator given by \eqref{eq:n3f-vard}. Then there is a constant $\beta>0$ such that \eqref{eq:3f-disc-inf-sup} holds 
for all parameters  $\lambda$, $\alpha$, and $\kappa$ satisfying \eqref{eq:param-range}.
\end{theorem}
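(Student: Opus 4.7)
The plan is to repeat the argument in the proof of Theorem~\ref{thm:3f-inf-sup} verbatim, but with the continuous Stokes inf--sup condition on $\bs{H}_0^1 \times L_0^2$ replaced by its discrete counterpart \eqref{eq:stokes-inf-sup-h-D}. Given any nonzero $(\bs{u}_h, p_{T,h}, p_{F,h}) \in \mathcal{X}_h$, my goal is to exhibit a test triple $(\bs{v}_h, q_{T,h}, q_{F,h}) \in \mathcal{X}_h$ for which the analogues of \eqref{eq:inf-sup-sub1}--\eqref{eq:inf-sup-sub2} hold, with constants depending only on the Stokes inf--sup constant $\beta_0$ and the domain.

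First I would split $p_{T,h} = p_{T,h,m} + p_{T,h,0}$ into mean-value and mean-value-zero parts as in \eqref{eq:decomp}. Since every reasonable Stokes-stable pair has $1 \in Q_{T,h}$, both components lie in $Q_{T,h}$, and in particular $p_{T,h,0} \in Q_{T,h} \cap L_0^2$. Applying \eqref{eq:stokes-inf-sup-h-D} to $p_{T,h,0}$ yields a function $\bs{w}_h \in \bs{V}_h$ satisfying the discrete analogue of \eqref{eq:w-estm2}, namely
\begin{align*}
(\div \bs{w}_h, p_{T,h,0}) = \| p_{T,h,0} \|_0^2, \qquad \| \e(\bs{w}_h) \|_0^2 \leq \beta_0^{-2} \| p_{T,h,0} \|_0^2.
\end{align*}
Because $\bs{w}_h \in \bs{V}_h \subset \bs{H}_0^1$, the divergence $\div \bs{w}_h$ has mean value zero, so $(\div \bs{w}_h, p_{T,h}) = (\div \bs{w}_h, p_{T,h,0}) = \| p_{T,h,0} \|_0^2$.

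With this $\bs{w}_h$ in hand I set $\bs{v}_h = \bs{u}_h - \delta_0 \bs{w}_h$, $q_{T,h} = -p_{T,h}$, $q_{F,h} = -p_{F,h}$, where $\delta_0 = \beta_0^{-2}$, exactly as in the continuous proof. The remaining work is pure bookkeeping: expanding $(\mathcal{A}_h(\bs{u}_h,p_{T,h},p_{F,h}), (\bs{v}_h,q_{T,h},q_{F,h}))$ using \eqref{eq:n3f-vard} gives the discrete analogue of \eqref{eq:inf-sup-bilinear}. The Young inequality bound for $(\e(\bs{u}_h), \e(\bs{w}_h))$ with the choice $\theta_0 = \delta_0 = \beta_0^{-2}$ yields \eqref{eq:inf-sup-aux2}, while a second application of Young's inequality with $\theta_1 = 3/8$ provides the cross-term estimate \eqref{eq:inf-sup-aux3}. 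Combining these bounds delivers the discrete version of \eqref{eq:inf-sup-sub2}, while the triangle inequality together with $\| \e(\bs{w}_h) \|_0 \leq \beta_0^{-1} \| p_{T,h,0} \|_0$ gives the norm bound \eqref{eq:inf-sup-sub1}. Dividing then produces \eqref{eq:3f-disc-inf-sup} with $\beta$ depending only on $\beta_0$ and hence independent of $h$, $\lambda$, $\alpha$, $\kappa$.

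The only genuinely new ingredient compared with Theorem~\ref{thm:3f-inf-sup} is the need for the mean-value decomposition to respect the discrete space $Q_{T,h}$; once that is checked, the rest of the argument is identical. Thus I expect no serious obstacle, which is precisely why the statement can be asserted to have a proof completely analogous to that of Theorem~\ref{thm:3f-inf-sup}.
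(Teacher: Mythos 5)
Your proposal is exactly the argument the paper has in mind: the paper omits the proof of Theorem~\ref{thm:dirichlet-bc}, stating only that it is completely analogous to Theorem~\ref{thm:3f-inf-sup}, and your plan---apply \eqref{eq:stokes-inf-sup-h-D} to the mean-value-zero part of $p_{T,h}$ (which indeed requires constants to lie in $Q_{T,h}$, true for the Stokes pairs used here), choose the same test triple $(\bs{u}_h-\delta_0\bs{w}_h,-p_{T,h},-p_{F,h})$, and repeat the Young-inequality estimates---is precisely that analogue. One small bookkeeping slip: with your normalization $\|\e(\bs{w}_h)\|_0\le\beta_0^{-1}\|p_{T,h,0}\|_0$, where $\beta_0$ is the inf-sup constant of \eqref{eq:stokes-inf-sup-h-D}, the Young parameters should be $\theta_0=\delta_0=\beta_0^{2}$ rather than $\beta_0^{-2}$; the paper's choice $\theta_0=\delta_0=\beta_0^{-2}$ is tied to its convention in \eqref{eq:w-estm2}, where $\beta_0$ plays the role of the reciprocal of the inf-sup constant, and with your literal choice the coefficient of $\|p_{T,h,0}\|_0^2$ in the analogue of \eqref{eq:inf-sup-aux2} becomes $1-\tfrac{1}{2}\beta_0^{-4}$, which is negative for small $\beta_0$.
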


There exist a number of choices of stable Stokes pairs $\bs{V}_h \times Q_{T,h}$, and in Section 6 below we
will present numerical results for two examples, the lowest order Taylor-Hood element and the MINI element. For more examples
of stable Stokes pairs
we refer to \cite{BFBook, girault-raviart}.
The parameter-dependent norms in \eqref{eq:n3f-norms} suggest a block diagonal preconditioner  of the form 
\begin{align} \label{eq:3f-precond}
\mathcal{B} = 
\begin{pmatrix} 
- \lap^{-1} & 0 & 0 \\
0 & \left( {\lambda}^{-1} I + I_0 \right)^{-1}& 0 \\
0 & 0 & \left( {\alpha^2}\lambda^{-1} I - \div(\kappa \nabla) \right)^{-1}
\end{pmatrix} ,
\end{align}
for the continuous system.
We recall that $I$ is the Riesz map of $Q_{T}$ into its dual $Q_{T}^*$, and $I_0$ is the corresponding map into 
the dual of $Q_{T} \cap L_0^2$. The first and third blocks of this block diagonal operator are inverses of standard  second-order elliptic operators, and  corresponding preconditioners to replace the exact inverses in the discrete case are well-studied. In contrast, the operator in the second block is less standard, and, as far as we know, a construction of an effective  preconditioner 
to replace it has not been proposed. We will discuss such a construction below.

\section{A preconditioner for the operator ${\lambda}^{-1} I + I_0 $} \label{sec:weighted-jacobi}
Throughout this section the parameter $\lambda$ is assumed to be a constant.
We recall from the discussion above that in order to construct an effective block diagonal  preconditioner 
of the form \eqref{eq:3f-precond} we need to replace the inverse of the operator ${\lambda}^{-1} I + I_0 $ by a 
spectrally equivalent operator which can be cheaply evaluated. In fact, when $\lambda \ge 1$ the operators 
${\lambda}^{-1} I + I_0 $ and ${\lambda}^{-1} I_m + I_0 $ are spectrally equivalent, so it is enough to 
approximate the inverse of the latter.

Let $N$ be the dimension of $Q_{T,h}$ and $\{ \phi^i \}_{i=1}^N$ be the standard nodal basis of $Q_{T,h}$. Let $1_{\Omega}$ be the constant function on $\Omega$ with value $1/\sqrt{|\Omega|}$ where $|\Omega|$ is the volume of $\Omega$. Denoting the mean-value zero and mean-value parts of $\phi^i$ by $\phi_0^i$ and $\phi_m^i$ as before, we can observe that 
\begin{align} \label{eq:m-i}
\phi_m^i = m_i 1_{\Omega}, \quad m_i := (\phi^i, 1_{\Omega}), \qquad \phi^i = \phi_0^i + \phi_m^i, \qquad \forall 1 \leq i \leq N.
\end{align}
If we let $\Bbb{M}$, $\Bbb{M}_0$, $\Bbb{M}_m$ be mass matrices corresponding to the operators $I$, $I_0$, and $I_m$, then their $(i,j)$-entries are 
\begin{align*}
\Bbb{M}(i,j) := (\phi^i, \phi^j), \quad \Bbb{M}_0 (i,j) := (\phi_0^i, \phi_0^j), \quad \Bbb{M}_m (i,j) := (\phi_m^i, \phi_m^j), \quad \forall 1 \leq i,j \leq N,
\end{align*}
and the matrix corresponding to $\lambda^{-1} I_m + I_0$ is $\lambda^{-1} \Bbb{M}_m + \Bbb{M}_0$.
Since $(\phi^i, \phi^j) = (\phi_0^i + \phi_m^i, \phi_0^j + \phi_m^j) = (\phi_0^i, \phi_0^j) + (\phi_m^i, \phi_m^j)$, one can see $\Bbb{M} = \Bbb{M}_0 + \Bbb{M}_m$, and therefore $\lambda^{-1} \Bbb{M}_m + \Bbb{M}_0 = \Bbb{M} + (\lambda^{-1} - 1) \Bbb{M}_m$. In addition, observe that $\Bbb{M}_m (i,j) = m_i m_j $ by \eqref{eq:m-i}, so
\algn{ \label{eq:m-lam}
\lambda^{-1} \Bbb{M}_m + \Bbb{M}_0 = \Bbb{M} + (\lambda^{-1} - 1) {\bf m} {\bf m}^T,
}
with
\begin{align} \label{eq:m-vec}
{\bf m} = 
\begin{pmatrix}
m_1 \\
m_2 \\
\vdots \\
m_N
\end{pmatrix} .
\end{align}
To construct a preconditioner we need to find an approximate inverse of $\lambda^{-1} \Bbb{M}_m + \Bbb{M}_0$. 
Since $\Bbb{M}$ is positive definite, we can rewrite the right-hand side of \eqref{eq:m-lam} as 
\begin{align} \label{eq:m-lam2}
\Ml :=\Bbb{M} + (\lambda^{-1} - 1) {\bf m} {\bf m}^T = (\Bbb{I} + (\lambda^{-1} - 1) {\bf m} {\bf m}^T \Bbb{M}^{-1}) \Bbb{M}, 
\end{align}
where $\Bbb{I}$ is the $N \times N$ identity matrix. Recall the Sherman--Morrison--Woodbury formula, 
\begin{align} \label{eq:sm-formula}
(\Bbb{I} + {\bf u} {\bf v}^T)^{-1} = \Bbb{I} - \frac{{\bf u v}^T}{1 + {\bf u}^T {\bf v}}, \qquad {\bf u}, {\bf v} \in \R^N \text{ with }{\bf u}^T {\bf v} \not = -1.
\end{align}
We will use it to find the inverse of $\Bbb{I} + (c^{-1} - 1) {\bf m} {\bf m}^T \Bbb{M}^{-1}$ for a constant $c \not = 0$. 
\begin{lemma} \label{lem:mat-id} Let ${\bf w} = (1 \cdots 1)^T \in \R^N$. For the mass matrix $\Bbb{M}$ and $\bf m$ in \eqref{eq:m-vec}, the following two identities holds:
\begin{align} \label{eq:Mx1}
\Bbb{M} 
{\bf w} = 
\sqrt{|\Omega|} {\bf m} , \qquad 
{\bf m}^T 
{\bf w} = \sqrt{|\Omega|}.
\end{align}
\end{lemma}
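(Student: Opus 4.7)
The plan is to reduce both identities to a single structural fact about the standard nodal basis $\{\phi^i\}_{i=1}^N$ used for $Q_{T,h}$: namely, that it forms a partition of unity on $\Omega$, i.e.\ $\sum_{i=1}^N \phi^i \equiv 1$. This is the defining property of the Lagrange nodal basis (whether for $\mathcal{P}_1$ of the Taylor--Hood pair, the MINI element, etc.), and the entries $w_i = 1$ of $\mathbf{w}$ are precisely the coefficients of the constant function $1$ in that basis.

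With that in hand, the first identity is almost a one-liner. I would write out the $i$-th component of $\mathbb{M}\mathbf{w}$ directly:
\begin{align*}
(\mathbb{M}\mathbf{w})_i = \sum_{j=1}^N \mathbb{M}(i,j)\, w_j = \sum_{j=1}^N (\phi^i, \phi^j) = \Bigl(\phi^i, \sum_{j=1}^N \phi^j\Bigr) = (\phi^i, 1).
\end{align*}
Then I would use $1 = \sqrt{|\Omega|}\, 1_\Omega$ (from the definition $1_\Omega = 1/\sqrt{|\Omega|}$) to conclude
\begin{align*}
(\mathbb{M}\mathbf{w})_i = \sqrt{|\Omega|}\,(\phi^i, 1_\Omega) = \sqrt{|\Omega|}\, m_i,
\end{align*}
which is the first identity.

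For the second identity, the same partition-of-unity argument gives
\begin{align*}
\mathbf{m}^T \mathbf{w} = \sum_{i=1}^N m_i = \sum_{i=1}^N (\phi^i, 1_\Omega) = \Bigl(\sum_{i=1}^N \phi^i, 1_\Omega\Bigr) = (1, 1_\Omega) = \sqrt{|\Omega|}\,(1_\Omega, 1_\Omega) = \sqrt{|\Omega|},
\end{align*}
using that $1_\Omega$ has unit $L^2$-norm on $\Omega$.

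There is essentially no obstacle here; the only substantive ingredient is the partition-of-unity property, which I would mention explicitly so that the reader sees why the lemma is tied to the standard Lagrange basis (and would fail for, say, a hierarchical or $L^2$-orthogonal basis). The role of the lemma in the sequel is also clear in context: it lets one evaluate $\mathbb{M}^{-1}\mathbf{m}$ in closed form via $\mathbb{M}^{-1}\mathbf{m} = \mathbf{w}/\sqrt{|\Omega|}$, so that the Sherman--Morrison formula \eqref{eq:sm-formula} applied to \eqref{eq:m-lam2} produces an inverse of $\mathbb{M}_\lambda$ that differs from $\mathbb{M}^{-1}$ only by a rank-one correction expressible through $\mathbf{w}$ and $\mathbf{m}$, which will be the content of the following construction.
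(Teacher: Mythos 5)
Your proposal is correct and follows essentially the same route as the paper: both hinge on the partition-of-unity property of the standard nodal basis (stated in the paper as $\frac{1}{\sqrt{|\Omega|}}\sum_{j=1}^N \phi^j = 1_\Omega$) and then compute the row sums of $\Bbb{M}$ and the sum $\sum_i m_i$ exactly as you do. Your added remarks on why the lemma is tied to the Lagrange basis and how it feeds the Sherman--Morrison step are consistent with the paper's subsequent use of the result.
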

\begin{proof}
Note that 
\begin{align} \label{eq:phi-sum}
\frac{1}{\sqrt{|\Omega|}} \sum_{j=1}^N \phi^j =  1_{\Omega},
\end{align}
because $\{ \phi^j \}_{1 \leq j \leq N}$ is the standard nodal basis and no boundary condition is imposed on \tblu{$Q_{T,h}$}.

If we consider the $i$-th row of the left-hand side of the first identity in \eqref{eq:Mx1}, then the definition of $\Bbb{M}$, \eqref{eq:phi-sum}, and \eqref{eq:m-i} give
\begin{align*}
\sum_{j=1}^N \Bbb{M}(i,j) = \sum_{j=1}^N (\phi^i, \phi^j) = \left( \phi^i, \sum_{j=1}^N \phi^j \right) = \sqrt{|\Omega|} m_i .
\end{align*}
This proves the first identity in \eqref{eq:Mx1}. The second identity follows by 
\begin{align*}
\sum_{i=1}^N m_i = \sum_{i=1}^N (\phi^i, 1_{\Omega}) = \sqrt{|\Omega|} (1_{\Omega}, 1_{\Omega}) = \sqrt{|\Omega|}.
\end{align*}
\end{proof}
\begin{cor} 
For the mass matrix $\Bbb{M}$, $\bf m$ in \eqref{eq:m-vec}, $\bf w$ in Lemma~\ref{lem:mat-id} and any constant $c\not =0$, the following holds: 
\begin{align} \label{eq:m-inv}
(\Bbb{I} + (c^{-1} - 1) {\bf m} {\bf m}^T \Bbb{M}^{-1})^{-1} = \Bbb{I} + (c- 1) \tblu{(\sqrt{|\Omega|})^{-1}} {\bf m} {\bf w}^T .
\end{align}
\end{cor}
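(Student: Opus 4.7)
The plan is to apply the Sherman--Morrison--Woodbury formula \eqref{eq:sm-formula} directly, writing the rank-one perturbation $(c^{-1}-1)\bs{m}\bs{m}^T\Bbb{M}^{-1}$ in the form $\bs{u}\bs{v}^T$ and then using the identities from Lemma~\ref{lem:mat-id} to simplify both the scalar denominator and the outer product in the resulting expression.

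First I would set $\bs{u} = (c^{-1}-1)\bs{m}$ and $\bs{v} = \Bbb{M}^{-1}\bs{m}$, which is well defined since $\Bbb{M}$ is symmetric positive definite. With these choices $\bs{u}\bs{v}^T = (c^{-1}-1)\bs{m}\bs{m}^T\Bbb{M}^{-1}$, so the matrix we wish to invert is precisely $\Bbb{I} + \bs{u}\bs{v}^T$. Next I would compute the scalar $\bs{u}^T\bs{v} = (c^{-1}-1)\bs{m}^T\Bbb{M}^{-1}\bs{m}$. The first identity of Lemma~\ref{lem:mat-id}, $\Bbb{M}\bs{w} = \sqrt{|\Omega|}\bs{m}$, yields $\Bbb{M}^{-1}\bs{m} = \bs{w}/\sqrt{|\Omega|}$, and combining with the second identity $\bs{m}^T\bs{w} = \sqrt{|\Omega|}$ gives $\bs{m}^T\Bbb{M}^{-1}\bs{m} = 1$. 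Therefore $\bs{u}^T\bs{v} = c^{-1}-1$ and $1 + \bs{u}^T\bs{v} = c^{-1}$, which is nonzero precisely because of the hypothesis $c\ne 0$; this is exactly the invertibility condition required by \eqref{eq:sm-formula}.

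Finally, applying the formula \eqref{eq:sm-formula} and substituting $\Bbb{M}^{-1}\bs{m} = \bs{w}/\sqrt{|\Omega|}$ into $\bs{u}\bs{v}^T$ gives
\[
(\Bbb{I} + \bs{u}\bs{v}^T)^{-1} = \Bbb{I} - \frac{(c^{-1}-1)(\sqrt{|\Omega|})^{-1}\bs{m}\bs{w}^T}{c^{-1}} = \Bbb{I} + (c-1)(\sqrt{|\Omega|})^{-1}\bs{m}\bs{w}^T,
\]
after simplifying the scalar factor $-(c^{-1}-1)/c^{-1} = c-1$. This is precisely \eqref{eq:m-inv}.

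I do not anticipate any real obstacle here; the argument is a two-line application of the rank-one update formula, and the only subtlety is noting that the hypothesis $c\ne 0$ is what ensures the denominator $1+\bs{u}^T\bs{v} = c^{-1}$ appearing in \eqref{eq:sm-formula} is nonzero, so that the formula applies.
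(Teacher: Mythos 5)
Your proof is correct and follows essentially the same route as the paper: both set $\bs{u}=(c^{-1}-1)\bf{m}$ and $\bf{v}=\Bbb{M}^{-1}\bf{m}$ (equivalently $\bf{v}^T=\bf{m}^T\Bbb{M}^{-1}$), use Lemma~\ref{lem:mat-id} to get $\bf{m}^T\Bbb{M}^{-1}=(\sqrt{|\Omega|})^{-1}\bf{w}^T$ and $\bs{u}^T\bf{v}=c^{-1}-1\neq-1$, and then apply \eqref{eq:sm-formula}. No issues.
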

\begin{proof}
Since $\Bbb{M}$ is symmetric, the first identity in \eqref{eq:Mx1} gives ${\bf m}^T \Bbb{M}^{-1} = (\sqrt{|\Omega|})^{-1} \tblu{{\bf w}^T}$. If we set ${\bf u} = (c^{-1} - 1) {\bf m}$ and $\tblu{{\bf v}^T} = {\bf m}^T \Bbb{M}^{-1}$, then the second identity in \eqref{eq:Mx1} gives ${\bf u}^T{\bf v} = c^{-1} - 1 \not = -1$. The assertion follows from \eqref{eq:sm-formula}.
\end{proof}

\begin{theorem}
Let $\Vl = (\Bbb{I}+a {\bf m} {\bf w}^T)^{-1}$ with $a = (-1 + \sqrt{\lambda})/\sqrt{|\Omega|}$. Then, for $\Ml$ in \eqref{eq:m-lam2}, $\Ml = \Vl \Bbb{M} \Vl^T$.
Thus, if $\Bbb{D}$ is a preconditioner of $\Bbb{M}$ with condition number $K$, then $\Vl^{-T} \Bbb{D} \Vl^{-1}$ is a preconditioner of $\Ml$ with same condition number.
\end{theorem}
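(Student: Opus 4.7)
\medskip

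\textbf{Proof plan.} The plan is to verify the congruence $\Ml = \Vl \Bbb{M} \Vl^T$ by working on the inverse side: writing $\Vl^{-1} = \Bbb{I} + a{\bf m}{\bf w}^T$ and showing instead that $\Vl^{-1}\Ml \Vl^{-T} = \Bbb{M}$. This reduces everything to a direct expansion of the two rank-one factors, and the identities from Lemma~\ref{lem:mat-id} collapse the cross terms into scalar multiples of ${\bf m}{\bf m}^T$.

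\medskip

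First I would guess the structure of the factorization. Looking for $\Ml$ in the form $(\Bbb{I}+b\,{\bf m}{\bf m}^T \Bbb{M}^{-1})\Bbb{M}(\Bbb{I}+b\,\Bbb{M}^{-1}{\bf m}{\bf m}^T)$, I expand and use Lemma~\ref{lem:mat-id} in the form $\Bbb{M}^{-1}{\bf m} = |\Omega|^{-1/2}{\bf w}$, which in particular gives ${\bf m}^T\Bbb{M}^{-1}{\bf m} = 1$. All products then reduce to multiples of ${\bf m}{\bf m}^T$, and matching the coefficient with \eqref{eq:m-lam} yields the quadratic $(1+b)^2 = \lambda^{-1}$, so $b = \lambda^{-1/2}-1$. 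In other words, the rank-one update behaves like a ``square root'' under this congruence.

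\medskip

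Next, I would invoke the Corollary \eqref{eq:m-inv} with the choice $c = \sqrt{\lambda}$. This identifies the inverse of $\Bbb{I} + (\lambda^{-1/2}-1){\bf m}{\bf m}^T\Bbb{M}^{-1}$ with $\Bbb{I} + a\,{\bf m}{\bf w}^T$, where $a = (-1+\sqrt{\lambda})/\sqrt{|\Omega|}$, matching exactly the $\Vl^{-1}$ prescribed in the statement. Combining with the factorization of the previous step gives $\Ml = \Vl \Bbb{M} \Vl^T$ on the nose. (An alternative route is to expand $\Vl^{-1}\Ml\Vl^{-T}$ directly and show the coefficient of ${\bf m}{\bf m}^T$ vanishes once $a$ is the stated value; this is the same calculation packaged differently.)

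\medskip

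Finally, for the preconditioner statement, the spectral equivalence $c_1 \Bbb{D}^{-1} \le \Bbb{M} \le c_2 \Bbb{D}^{-1}$ with $c_2/c_1 = K$ is preserved under the congruence $X \mapsto \Vl X \Vl^T$, so
\[
c_1\, \Vl \Bbb{D}^{-1} \Vl^T \le \Ml \le c_2\, \Vl \Bbb{D}^{-1} \Vl^T,
\]
which is exactly the statement that $\Vl^{-T}\Bbb{D}\Vl^{-1}$ preconditions $\Ml$ with the same condition number $K$. The main obstacle is really just the guessing step: recognizing that the correct scalar is $\lambda^{-1/2}-1$ rather than the naive $\lambda^{-1}-1$, which comes from the ``square root'' structure of the symmetric congruence. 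Once that is in hand, Lemma~\ref{lem:mat-id} and the Sherman--Morrison--Woodbury identity do all the remaining bookkeeping.
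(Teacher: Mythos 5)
Your proposal is correct and rests on the same machinery as the paper's proof — the identities of Lemma~\ref{lem:mat-id} and the Sherman--Morrison corollary \eqref{eq:m-inv} — just reorganized: you invoke \eqref{eq:m-inv} with $c=\sqrt{\lambda}$ to write $\Vl = \Bbb{I}+(\lambda^{-1/2}-1)\,{\bf m}{\bf m}^T\Bbb{M}^{-1}$ and expand $\Vl\Bbb{M}\Vl^T$ directly, whereas the paper uses $c=\lambda$ to get $\Ml=\Vl^2\Bbb{M}$ and then the commutation $\Vl\Bbb{M}=\Bbb{M}\Vl^T$, and your congruence/spectral-equivalence argument for the preconditioner claim is equivalent to the paper's identity $\Ml\Vl^{-T}\Bbb{D}\Vl^{-1}\Ml=\Vl\Bbb{M}\Bbb{D}\Bbb{M}\Vl^T$. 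Incidentally, your coefficient $\lambda^{-1/2}-1$ is the right one; the paper's $\bar{a}=1-(\sqrt{\lambda})^{-1}$ contains a harmless sign slip that does not affect its argument.
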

\begin{proof}
From the definition of $\Vl$ and the second identity in \eqref{eq:Mx1}, we can see 
\algns{ \Vl^{-2} = (\Bbb{I} + a {\bf m} {\bf w}^T)^2 = \Bbb{I} + (2a + a^2 \sqrt{|\Omega|}) {\bf m}{\bf w}^T = \Bbb{I} + (\lambda-1)(\sqrt{|\Omega|})^{-1} {\bf m}{\bf w}^T .
}
By the identity \eqref{eq:m-inv} with $c = \lambda$ and \eqref{eq:m-lam2}, we have $\Ml = \Vl^2 \Bbb{M}$. If we use \eqref{eq:m-inv} for $\Vl^{-1}$, then one can verify that $\Vl = \Bbb{I} + \bar{a} {\bf m}{\bf m}^T \Bbb{M}^{-1}$ with $\bar{a} = 1-(\sqrt{\lambda})^{-1}$. From this expression of $\Vl$, it is easy to check that $\Vl \Bbb{M} = \Bbb{M} \Vl^{T}$, so $\Ml = \Vl^2 \Bbb{M} = \Vl \Bbb{M} \Vl^T$. The assertion for preconditioner $\Vl^{-T} \Bbb{D} \Vl^{-1}$ follows from the identity $\Ml \Vl^{-T} \Bbb{D} \Vl^{-1} \Ml = \Vl \Bbb{M D M} \Vl^T$.
\end{proof}
For the preconditioner $\Bbb{D}$ for $\Bbb{M}$, it is known that the Jacobi preconditioner, i.e., the inverse of diagonal of mass matrix as a preconditioner has explicit condition number bounds \cite{MR968517}. 
If $Q_{T,h}$ is the piecewise linear continuous finite element, then the Jacobi preconditioner $\Bbb{D}$ for $\Bbb{M}$ is a constant multiple of the diagonal matrix $\diag (m_1^{-1}, m_2^{-1}, ..., m_N^{-1})$. As a consequence, ${\bf w}{\bf m}^T \Bbb{D} = \Bbb{D} {\bf m}{\bf w}^T$, so $\Vl^{-T} \Bbb{D} \Vl^{-1}$ can be reduced to $\Bbb{D} \Vl^{-2} = \Bbb{D} (\Bbb{I} + (\lambda - 1)(\sqrt{|\Omega|})^{-1} {\bf m} {\bf w}^T)$.

There is one caution in the implementation of the preconditioner $\Vl^{-T} \Bbb{D} \Vl^{-1}$ because ${\bf m w}^T$ and ${\bf m} {\bf m}^T$ in $\Vl^{-1}$ and $\Ml$ are dense matrices in general. We remark that the minimum residual method requires only matrix-vector multiplication operations. Therefore, to avoid computation with these dense matrices, 
we use the structure of the matrix ${\bf m w}^T$. More precisely, ${\bf m w}^T {\bf v}$ for an $\R^N$-vector $\bf v$ can be computed with two operations, the inner product ${\bf w}^T {\bf v}$ and constant-vector multiplication $({\bf w}^T {\bf v}){\bf m}$. Similarly, we can avoid generating ${\bf m} {\bf m}^T$ in $\Ml= \Bbb{M} + (\lambda^{-1} - 1) {\bf m} {\bf m}^T$. 
Finally, we remark that the preconditioner $\Vl^{-T} \Bbb{D} \Vl^{-1}$ is useful when a piecewise discontinuous finite element is used for $Q_{T,h}$ because ${\bf m w}^T$ and $\Ml$ are not sparse.
\begin{table}[ht]  
\caption{Boundary conditions (BC), preconditioners (PC), and finite elements of test cases. The first three cases use the lowest order Taylor--Hood element and the last case uses the MINI element ($\bs{B}$ = vector-valued bubble function).} \label{table:cases}
\centering
\begin{tabular}{>{\small}c | >{\small}c >{\small}c >{\small}c >{\small}c >{\small}c >{\small}c}
\hline
& \multirow{2}{*}{BC} & \multirow{2}{*}{PC} &  \multirow{2}{*}{finite elements}& \multicolumn{3}{>{\small}c}{system size for $N$} \\
& & & & $32$ & $64$ & $128$ \\  \hline 
Case 1 & $\Gamma_d \not = \pd \Omega$, $\Gamma_p = \pd \Omega$ & \eqref{eq:3f-precond2}	&  $\bs{\mathcal{P}}_2$-$\mathcal{P}_1$-$\mathcal{P}_1$ & 10628 & 41732 & 165380 \\ 
Case 2 & $\Gamma_d = \pd \Omega$, $\Gamma_p = \pd \Omega$ & \eqref{eq:3f-precond}	&  $\bs{\mathcal{P}}_2$-$\mathcal{P}_1$-$\mathcal{P}_1$ & 10628 & 41732 & 165380 \\ 
Case 3 & $\Gamma_d = \pd \Omega$, $\Gamma_p = \pd \Omega$ & \eqref{eq:3f-precond2}	& $\bs{\mathcal{P}}_2$-$\mathcal{P}_1$-$\mathcal{P}_1$ & 10628 & 41732 & 165380 \\ 
Case 4 & $\Gamma_d \not = \pd \Omega$, $\Gamma_p = \pd \Omega$ & \eqref{eq:3f-precond2}	& $(\bs{\mathcal{P}}_1+\bs{B})$-$\mathcal{P}_1$-$\mathcal{P}_1$ & 8452& 33284 & 132100 \\ 
\hline
\end{tabular}                           
\end{table}

\begin{table}[th]  
\caption{\small{Numbers of iteration and condition numbers of Case 1 (cf. Table~\ref{table:cases}). ($\Omega$ = unit square, partitioned as bisections of $N \times N$ rectangles, convergence criterion with relative residual of $10^{-6}$})} \label{table:eg1a}
\centering
\begin{tabular}{>{\small}c | >{\small}c | >{\small}c || >{\small}c >{\small}c >{\small}c >{\small}c }
\hline
& & & \multicolumn{4}{>{\small}c}{$\kappa$} \\
$N$ & $\alpha$ & $\lambda$ & $10^0$	&$10^{-4}$  &  $10^{-8}$ & $10^{-12}$ \\  
\hline \hline
\multirow{9}{*} {$32$} & 
\multirow{3}{*}{$10^0$} 
& $10^0$ & $33 \;(3.8) $ & $43 \;(6.3) $& $47 \;(7.6) $ & $47 \;(7.6) $  \\ 
& & $10^4$ & $52 \;(21.7)$ & $52 \;(21.7)$& $65 \;(21.7)$ & $63 \;(21.7)$  \\ 
& & $10^8$ & $52 \;(21.7)$ & $54 \;(21.7)$& $52 \;(21.7)$ & $62 \;(21.7)$  \\ 
\cline{3-7}
& \multirow{3}{*}{$10^{-2}$} 
& $10^0$ & $33 \;(3.8) $ & $33 \;(3.9) $& $43 \;(6.3) $  & $47 \;(7.6) $  \\ 
& & $10^4$ & $52 \;(21.7)$ & $52 \;(21.7)$& $52 \;(21.7)$  & $63 \;(21.7)$  \\ 
& & $10^8$ & $52 \;(21.7)$ & $52 \;(21.7)$& $52 \;(21.7)$  & $52 \;(21.7)$  \\ 
\cline{3-7}
& \multirow{3}{*}{$10^{-4}$} 
& $10^0$ & $33 \;(3.8) $ & $33 \;(3.8) $& $33 \;(3.8) $  & $43 \;(6.3) $  \\ 
& & $10^4$ & $50 \;(21.7)$ & $52 \;(21.7)$& $50 \;(21.7)$  & $52 \;(21.7)$  \\ 
& & $10^8$ & $54 \;(21.7)$ & $52 \;(21.7)$& $52 \;(21.7)$  & $52 \;(21.7)$  \\ 
\cline{2-7}                                                 
\multirow{9}{*} {$64$} & 
\multirow{3}{*}{$10^{0}$} 
& $10^0$ & $33 \;(3.9) $ & $40 \;(5.6) $& $47 \;(7.6) $  & $47 \;(7.6) $  \\ 
& & $10^4$ & $52 \;(21.7)$ & $52 \;(21.7)$& $63 \;(21.7)$  & $63 \;(21.7)$  \\ 
& & $10^8$ & $46 \;(21.7)$ & $52 \;(21.7)$& $52 \;(21.7)$  & $62 \;(21.7)$  \\ 
\cline{3-7}
& \multirow{3}{*}{$10^{-2}$} 
& $10^0$ & $33 \;(3.8) $ & $33 \;(3.8) $& $40 \;(5.6) $  & $47 \;(7.6) $  \\ 
& & $10^4$ & $52 \;(21.7)$ & $52 \;(21.7)$& $52 \;(21.7)$  & $58 \;(21.7)$  \\ 
& & $10^8$ & $52 \;(21.7)$ & $46 \;(21.7)$& $52 \;(21.7)$  & $48 \;(21.7)$  \\ 
\cline{3-7}
& \multirow{3}{*}{$10^{-4}$} 
& $10^0$ & $33 \;(3.9) $ & $33 \;(3.8) $& $33 \;(3.8) $  & $40 \;(5.6) $  \\ 
& & $10^4$ & $50 \;(21.7)$ & $46 \;(21.7)$& $50 \;(21.7)$  & $52 \;(21.7)$  \\ 
& & $10^8$ & $52 \;(21.7)$ & $52 \;(21.7)$& $50 \;(21.7)$  & $52 \;(21.7)$  \\ 
\cline{2-7}
\multirow{9}{*} {$128$} & 
\multirow{3}{*}{$10^{0}$} 
& $10^0$ & $32 \;(3.8) $ & $39 \;(5.4) $& $46 \;(7.7) $ & $45 \;(7.7) $  \\ 
& & $10^4$ & $51 \;(21.7)$ & $52 \;(21.7)$& $61 \;(21.7)$ & $59 \;(21.7)$  \\ 
& & $10^8$ & $51 \;(21.7)$ & $50 \;(21.7)$& $50 \;(21.7)$ & $54 \;(21.7)$  \\ 
\cline{3-7}
& \multirow{3}{*}{$10^{-2}$} 
& $10^0$ & $33 \;(3.8) $ & $33 \;(3.8) $& $39 \;(5.2) $ & $46 \;(7.7) $  \\ 
& & $10^4$ & $48 \;(21.7)$ & $45 \;(21.7)$& $52 \;(21.7)$ & $58 \;(21.7)$  \\ 
& & $10^8$ & $50 \;(21.7)$ & $52 \;(21.7)$& $50 \;(21.7)$ & $52 \;(21.7)$  \\ 
\cline{3-7}
& \multirow{3}{*}{$10^{-4}$} 
& $10^0$ & $33 \;(3.8) $ & $32 \;(3.8) $& $33 \;(3.8) $ & $39 \;(5.2) $  \\ 
& & $10^4$ & $44 \;(21.7)$ & $52 \;(21.7)$& $52 \;(21.7)$ & $52 \;(21.7)$  \\ 
& & $10^8$ & $50 \;(21.7)$ & $48 \;(21.7)$& $48 \;(21.7)$ & $50 \;(21.7)$  \\ 
\hline
\end{tabular}            
\end{table}

\section{Numerical results}
In this section we present some  numerical results which illustrate our theoretical results for the proposed preconditioners \eqref{eq:3f-precond2}
and \eqref{eq:3f-precond}. As before, all numerical experiments are carried out using FEniCS with Hypre algebraic multigrid operators 
as replacements for the exact inverses appearing in the first and third block of \eqref{eq:3f-precond2}
and \eqref{eq:3f-precond}. Furthermore, in  the preconditioner of the form \eqref{eq:3f-precond}, the second block is constructed by using the technique 
 outlined in Section~\ref{sec:weighted-jacobi}, while the standard Jacobi preconditioner is used in the second block of \eqref{eq:3f-precond2}.

In all the experiments the domain $\Omega$ is unit square. We show numerical results for four different combinations of boundary conditions, finite element spaces, and preconditioners. The different combinations are presented as 
Case 1-4 in  Table~\ref{table:cases}. In Case 1 and 4 the statement $\Gamma_d \not = \pd \Omega$ means that 
$\Gamma_d$ is taken as in Example 3.1, while problems with $\Gamma_d = \pd \Omega$ are consider in 
Case 2 and Case 3. We compare numerical results obtained by the  two preconditioners with structure of the form \eqref{eq:3f-precond} and \eqref{eq:3f-precond2}. The result of Theorem~\ref{thm:dirichlet-bc} suggests that preconditioners of the form \eqref{eq:3f-precond} are more robust than the ones of the form \eqref{eq:3f-precond2} in the case of Dirichlet boundary conditions. In other words, we expect that the results of Case 2 are more robust than the ones of Case 3. However, the  system preconditioned with a preconditioner of the form \eqref{eq:3f-precond2} has only one bad eigenvalue.
Therefore, as in Example~\ref{thm:eg2}, we can expect small  differences in the number of iterations.   Finally, in Case 4 we use the MINI element instead of the Taylor--Hood element  in order to show that 
our results are robust with respect to the choice of finite element spaces, as long as they fulfill the assumptions of the theory. 
In most of the examples the parameters $\lambda$, $\alpha$ and $\kappa$ are taken to be constants. However, in the last experiment, presented in Table~\ref{table:varying-kap}, $\kappa$ varies with the spatial variable.

\begin{table}[t]  
\caption{Numbers of iteration of test cases in Table~\ref{table:cases}. ($\Omega$ = unit square, partitioned into bisections of $N \times N$ rectangles, convergence criterion with relative residual of $10^{-6}$)} \label{table:4case-result}
\centering
\begin{tabular}{>{\small}c | >{\small}c | >{\small}c || >{\small}c >{\small}c >{\small}c | >{\small}c >{\small}c >{\small}c | >{\small}c >{\small}c >{\small}c | >{\small}c >{\small}c >{\small}c }
\hline
& & & \multicolumn{3}{>{\small}c}{Case 1} & \multicolumn{3}{>{\small}c}{Case 2} & \multicolumn{3}{>{\small}c}{Case 3} & \multicolumn{3}{>{\small}c}{Case 4} \\ \hline 
& & & \multicolumn{3}{>{\small}c|}{$N$} & \multicolumn{3}{>{\small}c|}{$N$} & \multicolumn{3}{>{\small}c|}{$N$} & \multicolumn{3}{>{\small}c}{$N$} \\
$\kappa$& $\alpha$ & $\lambda$ &   $32$	&$64$ &$128$ & $32$ &$64$ &$128$ & $32$ &$64$ &$128$ & $32$ &$64$ &$128$\\  \hline \hline
\multirow{6}{*}{$10^0$} &\multirow{3}{*}{$10^0$} & $10^0$ 	& $33$	& $33$	& $32$ & $29$	& $29$	& $29$ & $29$	& $29$	& $29$ & $34$	& $34$	& $34$\\ 
& & $10^{4}$ 							& $52$	& $52$	& $51$ & $46$	& $46$	& $46$ & $66$	& $45$	& $44$ & $60$	& $61$	& $60$\\ 
& & $10^{8}$ 							& $52$	& $46$	& $51$ & $46$	& $46$	& $45$ & $44$	& $44$	& $43$ & $60$	& $61$	& $60$\\ 
\cline{3-15}
& \multirow{3}{*}{$10^{-4}$} & $10^0$				& $33$	& $33$	& $33$ & $29$	& $29$	& $29$ & $29$	& $29$	& $29$ & $34$	& $34$	& $34$\\ 
& & $10^{4}$ 							& $50$	& $50$	& $44$ & $46$	& $46$	& $44$ & $66$	& $44$	& $44$ & $60$	& $60$	& $60$\\ 
& & $10^{8}$ 							& $54$	& $52$	& $50$ & $46$	& $46$	& $45$ & $45$	& $44$	& $43$ & $60$	& $60$	& $60$\\ 
\cline{2-15}
\multirow{6}{*}{$10^{-4}$} &\multirow{3}{*}{$10^0$} & $10^0$ 	& $43$	& $40$	& $39$ & $39$	& $38$	& $36$ & $39$	& $38$	& $36$ & $46$	& $43$	& $40$\\ 
& & $10^{4}$ 							& $52$	& $52$	& $52$ & $46$	& $46$	& $45$ & $68$	& $44$	& $44$ & $60$	& $61$	& $60$\\ 
& & $10^{8}$ 							& $54$	& $52$	& $50$ & $46$	& $46$	& $45$ & $66$	& $44$	& $43$ & $60$	& $60$	& $60$\\ 
\cline{3-15}
& \multirow{3}{*}{$10^{-4}$} & $10^0$				& $33$	& $33$	& $32$ & $29$	& $29$	& $29$ & $29$	& $29$	& $29$ & $34$	& $34$	& $34$\\ 
& & $10^{4}$ 							& $52$	& $46$	& $52$ & $46$	& $46$	& $45$ & $66$	& $44$	& $42$ & $60$	& $60$	& $60$\\ 
& & $10^{8}$ 							& $52$	& $52$	& $48$ & $46$	& $46$	& $45$ & $44$	& $44$	& $43$ & $60$	& $61$	& $60$\\ 
\cline{2-15}
\multirow{6}{*}{$10^{-8}$} &\multirow{3}{*}{$10^0$} & $10^0$ 	& $47$	& $47$	& $46$ & $42$	& $42$	& $42$ & $42$	& $42$	& $42$ & $52$	& $52$	& $52$\\ 
& & $10^{4}$ 							& $65$	& $63$	& $61$ & $61$	& $59$	& $58$ & $60$	& $58$	& $57$ & $73$	& $73$	& $72$\\ 
& & $10^{8}$ 							& $52$	& $52$	& $50$ & $46$	& $46$	& $45$ & $44$	& $44$	& $43$ & $60$	& $60$	& $60$\\ 
\cline{3-15}
& \multirow{3}{*}{$10^{-4}$} & $10^0$				& $47$	& $33$	& $33$ & $29$	& $29$	& $29$ & $29$	& $29$	& $29$ & $34$	& $34$	& $34$\\ 
& & $10^{4}$ 							& $65$	& $50$	& $52$ & $46$	& $45$	& $45$ & $66$	& $44$	& $43$ & $60$	& $60$	& $60$\\ 
& & $10^{8}$ 							& $52$	& $50$	& $48$ & $46$	& $46$	& $44$ & $44$	& $44$	& $40$ & $60$	& $60$	& $60$\\ 
\cline{2-15}
\multirow{6}{*}{$10^{-12}$} &\multirow{3}{*}{$10^0$} & $10^0$ 	& $47$	& $47$	& $45$ & $42$	& $42$	& $42$ & $42$	& $42$	& $42$ & $52$	& $52$	& $52$\\ 
& & $10^{4}$ 							& $63$	& $63$	& $59$ & $58$	& $58$	& $57$ & $57$	& $56$	& $56$ & $72$	& $72$	& $72$\\ 
& & $10^{8}$ 							& $62$	& $62$	& $54$ & $58$	& $58$	& $50$ & $57$	& $56$	& $54$ & $72$	& $72$	& $71$\\ 
\cline{3-15}
& \multirow{3}{*}{$10^{-4}$} & $10^0$				& $43$	& $40$	& $39$ & $39$	& $38$	& $36$ & $37$	& $38$	& $36$ & $47$	& $43$	& $40$\\ 
& & $10^{4}$ 							& $52$	& $52$	& $52$ & $46$	& $46$	& $44$ & $66$	& $44$	& $44$ & $60$	& $61$	& $60$\\ 
& & $10^{8}$ 							& $52$	& $52$	& $50$ & $46$	& $45$	& $44$ & $44$	& $43$	& $43$ & $60$	& $61$	& $60$\\ 
\hline
\end{tabular}                           
\end{table}

\begin{table}[h!]  
\caption{\small{Numbers of iteration and condition numbers of Case 1 (cf. Table~\ref{table:cases}) with nonconstant $\kappa$. ($\Omega$ = unit square, partitioned as bisections of $N \times N$ rectangles, $\Omega_1 = \{ (x,y)\,:\, 0 \leq x \leq 1, 1/4 \leq y \leq 3/4\}$, $\kappa = 1$ on $\Omega \setminus \Omega_1$, convergence criterion with relative residual of $10^{-6}$})} \label{table:varying-kap}
\centering
\begin{tabular}{>{\small}c | >{\small}c | >{\small}c || >{\small}c >{\small}c >{\small}c >{\small}c >{\small}c  }
\hline
& & & \multicolumn{5}{>{\small}c}{$\kappa$ on $\Omega_1$ } \\
$N$ & $\alpha$ & $\lambda$ & $10^{-2}$	&$10^{-4}$  &  $10^{-6}$ & $10^{-8}$ &  $10^{-10}$  \\  
\hline \hline
\multirow{9}{*} {$32$} & 
\multirow{3}{*}{$10^0$} 
& $10^0$ & $34 \;(4.5) $ & $42 \;(6.1) $& $46 \;(7.4) $ & $46 \;(7.5) $  & $46 \;(7.4) $  \\ 
& & $10^4$ & $54 \;(21.7)$ & $52 \;(21.7)$& $56 \;(21.7)$ & $65 \;(21.7)$  & $61 \;(21.7)$  \\ 
& & $10^8$ & $54 \;(21.7)$ & $54 \;(21.7)$& $46 \;(21.7)$ & $52 \;(21.7)$  & $55 \;(21.7)$  \\ 
\cline{3-8}
& \multirow{3}{*}{$10^{-2}$} 
& $10^0$ & $33 \;(3.8) $ & $33 \;(3.8) $& $34 \;(4.4) $  & $42 \;(6.1) $ & $46 \;(7.4) $   \\ 
& & $10^4$ & $52 \;(21.7)$ & $48 \;(21.7)$& $50 \;(21.7)$  & $53 \;(21.7)$ & $56 \;(21.7)$   \\ 
& & $10^8$ & $52 \;(21.7)$ & $52 \;(21.7)$& $50 \;(21.7)$  & $52 \;(21.7)$ & $46 \;(21.7)$   \\ 
\cline{3-8}
& \multirow{3}{*}{$10^{-4}$} 
& $10^0$ & $33 \;(3.8) $ & $33 \;(3.8) $& $33 \;(3.8) $  & $33 \;(3.8) $ & $34 \;(4.5) $   \\ 
& & $10^4$ & $52 \;(21.7)$ & $52 \;(21.7)$& $52 \;(21.7)$  & $52 \;(21.7)$ & $52 \;(21.7)$   \\ 
& & $10^8$ & $52 \;(21.7)$ & $52 \;(21.7)$& $50 \;(21.7)$  & $53 \;(21.7)$ & $50 \;(21.7)$   \\ 
\cline{2-8}                                                                              
\multirow{9}{*} {$64$} & 
\multirow{3}{*}{$10^{0}$} 
& $10^0$ & $34 \;(4.5) $ & $39 \;(5.4) $& $44 \;(7.4) $  & $46 \;(7.5) $ & $46 \;(7.5) $   \\ 
& & $10^4$ & $52 \;(21.7)$ & $52 \;(21.7)$& $55 \;(21.7)$  & $63 \;(21.7)$ & $60 \;(21.7)$   \\ 
& & $10^8$ & $52 \;(21.7)$ & $46 \;(21.7)$& $46 \;(21.7)$  & $50 \;(21.7)$ & $51 \;(21.7)$   \\ 
\cline{3-8}
& \multirow{3}{*}{$10^{-2}$} 
& $10^0$ & $33 \;(3.8) $ & $33 \;(3.8) $& $34 \;(4.4) $  & $39 \;(5.4) $ & $45 \;(7.4) $   \\ 
& & $10^4$ & $46 \;(21.7)$ & $52 \;(21.7)$& $52 \;(21.7)$  & $52 \;(21.7)$ & $51 \;(21.7)$   \\ 
& & $10^8$ & $48 \;(21.7)$ & $52 \;(21.7)$& $52 \;(21.7)$  & $52 \;(21.7)$ & $52 \;(21.7)$   \\ 
\cline{3-8}
& \multirow{3}{*}{$10^{-4}$} 
& $10^0$ & $33 \;(3.8) $ & $33 \;(3.8) $& $33 \;(3.8) $  & $33 \;(3.8) $ & $34 \;(4.4) $   \\ 
& & $10^4$ & $52 \;(21.7)$ & $52 \;(21.7)$& $52 \;(21.7)$  & $52 \;(21.7)$ & $52 \;(21.7)$   \\ 
& & $10^8$ & $52 \;(21.7)$ & $52 \;(21.7)$& $52 \;(21.7)$  & $52 \;(21.7)$ & $46 \;(21.7)$   \\ 
\cline{2-8}
\multirow{9}{*} {$128$} & 
\multirow{3}{*}{$10^{0}$} 
& $10^0$ & $34 \;(4.4) $ & $37 \;(5.0) $& $44 \;(7.2) $ & $46 \;(7.5) $ & $44 \;(7.5) $  \\ 
& & $10^4$ & $50 \;(21.7)$ & $48 \;(21.7)$& $49 \;(21.7)$ & $56 \;(21.7)$ & $61 \;(21.7)$  \\ 
& & $10^8$ & $50 \;(21.7)$ & $50 \;(21.7)$& $52 \;(21.7)$ & $50 \;(21.7)$ & $47 \;(21.7)$  \\ 
\cline{3-8}
& \multirow{3}{*}{$10^{-2}$} 
& $10^0$ & $33 \;(3.8) $ & $32 \;(3.9) $& $34 \;(4.4) $ & $37 \;(5.0) $ & $44 \;(7.1) $  \\ 
& & $10^4$ & $50 \;(21.7)$ & $46 \;(21.7)$& $50 \;(21.7)$ & $52 \;(21.7)$ & $49 \;(21.7)$  \\ 
& & $10^8$ & $52 \;(21.7)$ & $52 \;(21.7)$& $50 \;(21.7)$ & $52 \;(21.7)$ & $50 \;(21.7)$  \\ 
\cline{3-8}
& \multirow{3}{*}{$10^{-4}$} 
& $10^0$ & $32 \;(3.8) $ & $33 \;(3.9) $& $33 \;(3.8) $ & $33 \;(3.9) $ & $34 \;(4.4) $  \\ 
& & $10^4$ & $52 \;(21.7)$ & $50 \;(21.7)$& $52 \;(21.7)$ & $49 \;(21.7)$ & $46 \;(21.7)$  \\ 
& & $10^8$ & $52 \;(21.7)$ & $50 \;(21.7)$& $48 \;(21.7)$ & $52 \;(21.7)$ & $50 \;(21.7)$  \\ 
\hline
\end{tabular}                           
\end{table}

In Table~\ref{table:eg1a}, we present numbers of iteration of Case 1. The results are fairly robust with respect to  parameter changes and mesh refinements. To compare robustness of preconditioners of all the cases, we present numbers of iteration 
for all the different four cases in Table~\ref{table:4case-result}.  As expected, the results of Case 2 are slightly better than the ones of Case 3, in particular for $N=32$. Although there are no remarkable differences in the presented results, in the full numerical results which are not included here, the results for Case 3 shows that this method sometimes need about 
$30 -45 \%$ more iterations than those of Case 2. In Case 4 we use the MINI element instead of the Taylor-Hood element. Although the numbers of iteration are larger than those for the Taylor--Hood element, the results are still quite robust with 
respect to  changes of parameters. As the final experiment, a model problem with nonconstant $\kappa$ is considered. We assume that $\kappa$ is small on 
\algns{
\Omega_1 = \{(x,y)\,:\, 0 \leq x \leq 1, 1/4 \leq y \leq 3/4 \} \subset \Omega, }
and $\kappa=1$ on $\Omega \setminus \Omega_1$. The numerical results in Table~\ref{table:varying-kap} are fairly robust for mesh refinements and changes of parameters, including high contrasts of $\kappa$.

\section{Conclusion}
We have studied parameter-robust discretizations and construction of preconditioners for Biot's consolidation model. To apply the framework of \cite{MR2769031} we have proposed  a new three-field formulation of the Biot system. We have showed that  preconditioners based on mapping properties and parameter-dependent norms are  robust with respect to variations of the model parameters, choice of finite element spaces satisfying the proper stability condition, and the discretization parameters. In particular, the variations of parameters in our consideration cover large shear and bulk elastic moduli, small hydraulic conductivity, small time-step, including the ranges of interest in geophysics and computational biomechanics applications. 
Furthermore, our theoretical results are confirmed  by a number of  numerical experiments.



\bibliographystyle{simunsrtcompress}
\vspace{.125in}

\end{document}